\newtheorem{theorem}{Theorem}[section]
\newtheorem{proposition}[theorem]{Proposition}
\newtheorem{lemma}[theorem]{Lemma}
\newtheorem{definition}[theorem]{Definition}
\newtheorem{corollary}[theorem]{Corollary}
\newtheorem{maintheorem}{Theorem}
\newtheorem*{question*}{Question}
\theoremstyle{definition}
\newcommand{\bm}[1]{\mbox{\boldmath $#1$}}
\newcommand{\invlim}{\ensuremath{\underleftarrow{\operatorname{lim}}}}
\newcommand{\sma}{\wedge}
\begin{document}

\title[Completion of $G$-spectra and stable maps between classifying spaces]{Completion of \bm{G}-spectra and stable maps between classifying spaces}

%\title[Completion of $G$-spectra and stable maps]{Completion of $G$-spectra and stable maps between classifying spaces}

\author{K\'ari Ragnarsson}
\address{Department of Mathematical Sciences\\
         Depaul University\\ 
         %2320 N.~Kenmore Avenue\\ 
         Chicago, IL \\ %60614\\
         USA}
\email{kragnars@math.depaul.edu}
\date{March 17, 2011}

\begin{abstract} We prove structural theorems  for computing the completion of a $G$-spectrum at the augmentation ideal of the Burnside ring of a finite group $G$. First we show that a $G$-spectrum can be replaced by a spectrum obtained by allowing only isotropy groups of prime power order without changing the homotopy type of the completion. We then show that this completion can be computed as a homotopy colimit of completions of spectra obtained by further restricting isotropy to one prime at a time, and that these completions can be computed in terms of completion at a prime.
 
As an application, we show that the spectrum of stable maps from $BG$ to the classifying space of a compact Lie group $K$ splits non-equivariantly as a wedge sum of $p$-completed suspension spectra of classifying spaces of certain subquotients of $G \times K$. In particular this describes the  dual of $BG$. 
\end{abstract}

\maketitle

\section{Introduction}
Completions of $G$-spectra at ideals of the Burnside ring were introduced by Greenlees--May in \cite{GreenleesMay:CompBurnside} as a means to give a topological flavour to completion theorems in stable equivariant homotopy theory. We say that the completion theorem holds for a $G$-spectrum $X$ if the natural map
 \[ X \cong F(pt_+ , X) \longrightarrow F(EG_+,X) \, ,  \]
induced by the projection $EG \to pt$ is an $I(G)$-adic completion map, where $I(G)$ is the augmentation ideal in the Burnside ring $A(G)$. 
(In \cite{GreenleesMay:CompBurnside} $G$ can be a compact Lie group, but we consider only finite groups in this paper.) The subscript $+$ is used here to denote an added disjoint basepoint.

Completion theorems are especially helpful in the case where $X$ is $G$-split, in the sense of \cite{MayMcClure:Segal}, for then one has a non-equivariant equivalence $F(EG_+,X)^G \simeq F(BG_+,i^*X),$ where $i^*X$ is the underlying non-equivariant spectrum of $X$. Thus, if the completion theorem holds for a $G$-split $G$-spectrum $X$, then the natural map
 \[  X^G \longrightarrow F(BG_+,i^*X) \]
is an $I(G)$-adic completion map. Taking homotopy groups, one deduces that the natural map
 \[ X^*_G(pt_+) \longrightarrow X^*(BG_+) \]
is an $I(G)$-adic completion map. This elucidates the importance of completion theorems: one is often interested in $X^*(BG_+)$, and $X^*_G(pt_+)$ is usually easier to compute. 

The original, and most important, examples of completion theorems are the Atiyah--Segal completion theorem \cite{Atiyah:CharactersAndCohomology,AtiyahSegal:Completion}, where $X$ is real or complex periodic equivariant $K$-theory, and the Segal conjecture, proved by Carlsson in \cite{Carlsson:SegalConjecture}, where $X$ is the equivariant sphere spectrum $\SphereSpectrum_G$. These were each originally proved as cohomological statements and lifted to the spectrum level in \cite{GreenleesMay:CompBurnside}. The Atiyah--Segal completion theorem gives, in the complex case, an isomorphism 
 \[ \hat{R}[G] \overset{\cong}{\longrightarrow} KU^0(BG) \, \]
where $\hat{R}[G]$ is the completion of the complex representation ring at its augmentation ideal\footnote{$A(G)$ acts on $R[G]$ in an obvious way, and it is not hard to show that completion at $I(G)$ is equivalent to completion at the augmentation ideal of $R[G]$.}, and a similar statement in the real case. This inspired Segal to conjecture an isomorphism
 \[ A(G)^\wedge_{I(G)} \overset{\cong}{\longrightarrow} \pi^0(BG_+) \, , \]
which he later strengthened to a stronger from, involving higher cohomotopy groups. It is the stronger form which Carlsson proved.

While completion theorems are very descriptive, their utility is somewhat restricted by the fact that $I(G)$-adic completion has so far been very difficult to compute in practice. A notable exception is when $G$ is a $p$-group, for in this case results of May--McClure from \cite{MayMcClure:Segal} on completion of Mackey functors suggest that the $p$-adic completion of $X$ factors through the $I(G)$-adic completion, and the difference is controlled by the underlying non-equivariant spectrum. In this paper we make this spectrum-level analogue of the May--McClure result precise, and generalize it to an arbitrary finite group $G$, at the cost of restricting to $G$-spectra whose isotropy groups are $p$-groups (see Theorem \ref{mthm:X[F]Comp}) below. 
 
More generally, we present an effective approach for computing the $I(G)$-adic completion of an arbitrary $G$-spectrum $X$ by restricting isotropy to $p$-groups, one prime at a time. The key to this is a result which allows us to replace $X$ by a spectrum whose $I(G)$-adic completion is easier to handle. For a $G$-spectrum $X$, we form a $G$-spectrum $X[\F_\mathbf{P}]$ which one can informally think of as the subspectrum of $X$ whose isotropy groups are of prime power order; formally $X[\F_\mathbf{P}] \defeq {E\F_\mathbf{P}}_+ \sma X$, where $E\F_\mathbf{P}$ is the universal space for the family $\F_\mathbf{P}$ of subgroups in $G$ of prime power order (for any prime). The canonical map $E\F_\mathbf{P} \to pt$ induces a map $X[\F_\mathbf{P}] \to X$, and we have the following result.
 
\begin{maintheorem}[Replacement theorem] \label{mthm:Replacement}
For a bounded-below $G$-spectrum $X$, the natural map $X[\F_\mathbf{P}] \to X$ induces a weak equivalence upon $I(G)$-adic completion.
\end{maintheorem}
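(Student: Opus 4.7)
The plan is to work with the cofibre sequence arising from the collapse map $E\F_\mathbf{P}_+ \to S^0$. Writing $\widetilde{E\F_\mathbf{P}}$ for its cofibre, one obtains a cofibre sequence of $G$-spectra
\[ X[\F_\mathbf{P}] \longrightarrow X \longrightarrow \widetilde{E\F_\mathbf{P}} \wedge X, \]
and since $I(G)$-adic completion preserves cofibre sequences, the theorem reduces to showing that $(\widetilde{E\F_\mathbf{P}} \wedge X)^\wedge_{I(G)} \simeq *$ for every bounded-below $G$-spectrum $X$.

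To establish this vanishing I would exploit the characteristic fixed point property of $\widetilde{E\F_\mathbf{P}}$: the geometric fixed points $\Phi^H \widetilde{E\F_\mathbf{P}}$ are contractible when $H \in \F_\mathbf{P}$ and equivalent to $S^0$ otherwise. Since $\Phi^H$ commutes with smash products, the spectrum $Y \defeq \widetilde{E\F_\mathbf{P}} \wedge X$ satisfies $\Phi^H Y \simeq *$ for every subgroup $H$ of prime power order. Thus the theorem is reduced to the following criterion: a bounded-below $G$-spectrum $Y$ with $\Phi^H Y \simeq *$ for all $H \in \F_\mathbf{P}$ satisfies $Y^\wedge_{I(G)} \simeq *$.

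The justification of this criterion is the technical core, and I would pursue it along the following lines. By Dress's classification, the prime ideals of $A(G)$ containing $I(G)$ are parametrized (up to conjugacy) by pairs $(H,p)$ with $H$ a $p$-subgroup of $G$. Combining this with the Greenlees--May description of $I(G)$-adic completion as a derived completion allows one to analyse the completion one prime at a time: after $p$-completion, the relevant piece is computed from the geometric fixed points $\Phi^H Y$ for $H$ a $p$-subgroup of $G$, all of which vanish by hypothesis. The bounded-below hypothesis is precisely what is needed to ensure convergence of the tower (or spectral sequence) that assembles these local pieces into $Y^\wedge_{I(G)}$.

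The main obstacle is making this prime-by-prime reduction rigorous. Translating the vanishing of geometric fixed points at $\F_\mathbf{P}$ into triviality of $I(G)$-adic completion requires a careful compatibility of completion with the isotropy-separation machinery, and the interaction of distinct primes through the Burnside ring is the genuinely new difficulty beyond the $p$-group case treated by May--McClure.
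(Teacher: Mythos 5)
Your reduction is fine as far as it goes: the cofibre sequence $X[\F_\mathbf{P}] \to X \to \widetilde{E}\F_\mathbf{P} \sma X$, the fact that completion preserves cofibre sequences up to weak equivalence, and the observation that $Y \defeq \widetilde{E}\F_\mathbf{P} \sma X$ has $\Phi^H Y \simeq *$ for all $H \in \F_\mathbf{P}$ are all correct (and $Y$ is still bounded below). But the statement you then call a ``criterion'' --- that a bounded-below $Y$ with $\Phi^H Y \simeq *$ for all $H$ of prime power order has $Y^\wedge_{I(G)} \simeq *$ --- \emph{is} the theorem, and your justification of it is not an argument. Dress's classification of the primes of $A(G)$ over $I(G)$ is a statement about $\operatorname{Spec} A(G)$; it does not by itself relate the $I(G)$-adic completion of a $G$-spectrum to its geometric fixed points, and the key assertion ``after $p$-completion, the relevant piece is computed from $\Phi^H Y$ for $H$ a $p$-subgroup'' is exactly the nontrivial content that you flag as the main obstacle and leave unproved. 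As written, the proposal reduces the theorem to an unproved claim of essentially the same depth.

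What closes the gap in the paper is a pair of concrete inputs, after which your reduction (or even the map $X[\F_\mathbf{P}] \to X$ directly, with no cofibre needed) finishes in a few lines. First, Greenlees's theorem (Theorem \ref{thm:Greenlees}): for a bounded-below $G$-spectrum, applying $\Phi^H$ to the completion map $Y \to Y^\wedge_J$ gives, non-equivariantly, the $\phi^H(J)$-adic completion of $\Phi^H Y$, where $\phi^H$ is the $H$-fixed-point mark homomorphism. Second, the elementary computation (Lemma \ref{lem:nH}) that $\phi^H(I(G))$ is $(0)$ for $H = 1$, a nonzero power of $(p)$ for $H$ a nontrivial $p$-group, and all of $\Z$ when $|H|$ is divisible by two distinct primes. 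Granting these, $\Phi^H(Y^\wedge_{I(G)})$ is the completion of the contractible spectrum $\Phi^H Y$ when $H \in \F_\mathbf{P}$, and is completion at the unit ideal $\Z$ (hence contractible) when $H \notin \F_\mathbf{P}$; since weak equivalences of $G$-spectra are detected on geometric fixed points (Proposition \ref{prop:WeakGeometric}), $Y^\wedge_{I(G)} \simeq *$ follows. So the missing ideas are precisely the spectrum-level compatibility of completion with geometric fixed points and the computation of the ideals $\phi^H(I(G))$; the route you sketch through derived completion towers would have to reprove some version of these, and you do not supply it.
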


Of course, when $X$ is a $G$-CW spectrum, the replacement theorem gives an actual homotopy equivalence $X[\F_\mathbf{P}]^\wedge_{I(G)} \simeq X^\wedge_{I(G)}$. 

The next step in our process is to decompose $X[\F_\mathbf{P}]^\wedge_{I(G)}$ into component spectra $X[\F_p]^\wedge_{I(G)}$, for different primes $p$, where $\F_p$ is the family of $p$-subgroups in $G$. This decomposition is not quite a wedge sum splitting, as each component $X[\F_p]^\wedge_{I(G)}$ contains the spectrum $X[\F_1]^\wedge_{I(G)}$, where $\F_1$ is the family containing only the trivial subgroup. Using a homotopy colimit to keep track of this, we obtain the following result.
\begin{maintheorem}[Decomposition theorem] \label{mthm:Decomposition}
For a bounded-below $G$-spectrum $X$, $X[\F_\mathbf{P}]^\wedge_{I(G)}$ is weakly equivalent to the homotopy colimit of the diagram consisting of the natural maps $X[\F_1]^\wedge_{I(G)} \to X[\F_p]^\wedge_{I(G)}$ for all primes $p$.
\end{maintheorem}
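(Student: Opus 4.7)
The plan is to establish an unstable decomposition $E\F_\mathbf{P} \simeq \operatorname{hocolim}_p E\F_p$ as $G$-spaces, propagate it to the spectrum level by smashing with $X$, and then commute $I(G)$-adic completion past the resulting (effectively finite) hocolim.

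For the space-level step, I would show that $E\F_\mathbf{P}$ is $G$-equivalent to the homotopy colimit of the star-shaped diagram with $E\F_1$ at the center and one leg $E\F_1 \to E\F_p$ for each prime $p$. Since homotopy colimits of $G$-spaces commute with fixed points, this can be checked pointwise: for each subgroup $H \le G$, both sides should have $H$-fixed points that are contractible if $H \in \F_\mathbf{P}$ and empty otherwise. The case analysis rests on $\F_\mathbf{P} = \bigcup_p \F_p$ and $\F_p \cap \F_q = \F_1$ for distinct primes $p, q$. Explicitly, if $H = 1$ then every space in the diagram is contractible; if $H$ is a non-trivial $q$-group, only the $q$-leg is non-empty (and contractible) while the center and other legs are empty, so the star hocolim is still contractible; and if $H$ is not of prime power order, every term is empty.

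Adding a disjoint basepoint and smashing with $X$ both preserve homotopy colimits, so I obtain a $G$-spectrum equivalence $X[\F_\mathbf{P}] \simeq \operatorname{hocolim}_p X[\F_p]$, where the center is $X[\F_1]$ and the legs are $X[\F_p]$.

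The final step is to apply $I(G)$-adic completion. The indexing diagram is effectively finite: whenever $p$ does not divide $|G|$ one has $\F_p = \F_1$, so the corresponding leg is an identity and may be discarded without changing the hocolim. Since $I(G)$-adic completion is triangulated on the homotopy category of $G$-spectra (it preserves cofiber sequences), it commutes with finite homotopy colimits, which for our star diagram can be built by iterated pushouts Mayer--Vietoris style. This yields the desired equivalence $X[\F_\mathbf{P}]^\wedge_{I(G)} \simeq \operatorname{hocolim}_p X[\F_p]^\wedge_{I(G)}$. The main obstacle is exactly this commutation: completion does not respect general colimits, and it is the finiteness of the star diagram, together with the triangulated nature of completion, that makes the step go through.
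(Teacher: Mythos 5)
Your proof is correct, but it takes a genuinely different route from the paper at the decisive step. The space-level identification of $E\F_\mathbf{P}$ with the homotopy colimit of the star $E\F_1 \to E\F_p$ is essentially the paper's Lemmas \ref{lem:EFColimit} and \ref{lem:X[F]Colimit}: the paper gets an actual point-set colimit from Elmendorf's construction and passes to the homotopy colimit using that the legs $E\F_1 \to E\F_p$ are cofibrations, rather than checking fixed points as you do, but this difference is cosmetic. Where you genuinely diverge is in commuting completion past the colimit. The paper never commutes completion with a homotopy colimit directly; instead its key input is Lemma \ref{lem:IsoResCompCommute} --- that for bounded-below $X$ the natural map $X^\wedge_J[\F] \to X[\F]^\wedge_J$ is a weak equivalence, proved via geometric fixed points (Proposition \ref{prop:WeakGeometric}) and Greenlees' Theorem \ref{thm:Greenlees} --- after which the diagram of spectra $X^\wedge_J[\F]$ commutes with the colimit for free, since isotropy restriction is a smash product. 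You instead express the effectively finite star as iterated homotopy pushouts, i.e.\ cofibres of maps between finite wedges, and use that $J$-adic completion is exact and preserves finite wedges; this is legitimate, since completion is $F(M(J),-)$ (or by the standard Bousfield-localization argument), a fact the paper itself invokes in the proof of Proposition \ref{prop:X[F]Comp}(c). What each approach buys: yours is more elementary (no geometric fixed points, no Theorem \ref{thm:Greenlees}) and in fact never uses the bounded-below hypothesis, so it establishes the decomposition for arbitrary $G$-spectra and arbitrary ideals; the paper's route costs that hypothesis, but Lemma \ref{lem:IsoResCompCommute} is a reusable tool needed anyway for Theorem \ref{mthm:X[F]Comp}, and the geometric fixed-point machinery is already in place from the Replacement Theorem. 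Two small points to make explicit in a final write-up: justify the exactness claim as above rather than by bare assertion, and note that discarding the identity legs at primes not dividing $|G|$ changes neither side and that your resulting equivalence is the natural comparison map of the statement (it is, since your space-level equivalence is compatible with the inclusions $E\F_p \to E\F_\mathbf{P}$).
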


It clearly suffices to take the homotopy colimit over primes that divide the order of $G$, since the map $X[\F_1]^\wedge_{I(G)} \to X[\F_p]^\wedge_{I(G)}$ is a weak equivalence for other primes.  However, it is theoretically useful to consider the diagram ranging over all primes, as this allows us to deduce naturality in $G$ in certain circumstances.

The completions of the component spectra can be computed as follows.
\begin{maintheorem} \label{mthm:X[F]Comp}
Let $X$ be a bounded-below $G$-spectrum.
\begin{enumerate}
\item[(a)] The completion map $X[\F_1] \to X[\F_1]^\wedge_{I(G)}$ is a weak equivalence.
\item[(b)] For a prime $p$, the $I(G)$-adic completion of the cofibration sequence
  \[ X[\F_1] \longrightarrow X[\F_p] \longrightarrow X[\F_p,\F_1] \]
is weakly equivalent to the cofibration sequence
  \[ X[\F_1] \longrightarrow X[\F_p]^\wedge_{I(G)} \longrightarrow \pComp{X[\F_p,\F_1]} \, .\]
\end{enumerate}
\end{maintheorem}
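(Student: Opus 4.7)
For part (a), the plan is to exploit the freeness of $X[\F_1] = EG_+ \sma X$ to show that the augmentation ideal $I(G)$ annihilates the equivariant homotopy groups $\pi^H_*(X[\F_1])$ for every subgroup $H \leq G$; this forces $X[\F_1]/I(G)^n \simeq X[\F_1]$ for all $n$, so the completion map is an equivalence. To carry this out I would fix $H$, note that $X[\F_1]$ is still $H$-free, use the Adams isomorphism to identify $\pi^H_*(X[\F_1])$ with $\pi_*(X[\F_1]/H)$, and compute the action of a typical generator $[G/K] - [G:K]$ of $I(G)$ via restriction--transfer. By the double coset formula, the restriction $A(G) \to A(H)$ sends $[G/K]$ to $\sum_g [H/(H \cap gKg^{-1})]$, and on a free $H$-spectrum each $[H/L]$ acts on $\pi^H_*$ as multiplication by the index $[H:L]$, since the composition of restriction to $L$ followed by transfer back to $H$ is the degree of the associated covering. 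Summing these indices over the double cosets recovers $[G:K]$, yielding the desired vanishing.

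For part (b), the strategy is to apply the $I(G)$-adic completion termwise to the cofibration $X[\F_1] \to X[\F_p] \to X[\F_p,\F_1]$ and identify each resulting term. The finite stages $(-)/I(G)^n$ preserve cofibration sequences (via the Koszul/smashing construction that realises these quotients), so each level of the completion tower is a cofibration, and passing to the homotopy limit over $n$ preserves the cofiber structure (cofiber and fiber sequences coincide in spectra, and homotopy limits preserve fiber sequences). This yields a cofibration $X[\F_1]^\wedge_{I(G)} \to X[\F_p]^\wedge_{I(G)} \to X[\F_p,\F_1]^\wedge_{I(G)}$; the first term is $X[\F_1]$ by (a). The remaining task is to identify $X[\F_p,\F_1]^\wedge_{I(G)}$ with $\pComp{X[\F_p,\F_1]}$. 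Since the isotropy of $X[\F_p,\F_1]$ consists entirely of non-trivial $p$-subgroups of $G$, I would reduce to the case where $G$ is itself a $p$-group by a Sylow-restriction argument, and then invoke the spectrum-level refinement of May--McClure's result: for a $p$-group $P$, the ideals $I(P)$ and $(p)$ define the same completion topology on $A(P)$, so the respective completions of bounded-below $P$-spectra agree.

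The main obstacle is this last identification. Controlling the Burnside-ring action on $G$-spectra with non-trivial $p$-group isotropy across all fixed-point levels simultaneously is delicate, and one must verify that both $I(G)$-adic completion and $p$-adic completion are compatible with Sylow restriction in a manner that makes the reduction to the $p$-group case clean. This is also precisely the step where having excised the trivial-isotropy \emph{free part} via (a) is indispensable: the free summand would otherwise contribute information at primes other than $p$ and obstruct the identification with the $p$-adic completion.
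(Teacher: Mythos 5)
The key step in your argument for (a) is false. On equivariant stable homotopy groups of a free $H$-spectrum, the element $[H/L]$ acts as the composite $\mathrm{tr}^H_L\circ\mathrm{res}^H_L$, and this is \emph{not} multiplication by the index $[H:L]$: that identity holds on ordinary homology (or rationally), but stably the transfer of a covering is highly nontrivial. Concretely, take $G=H=\Z/2$, $L=1$ and $X[\F_1]=\Sigma^\infty_G EG_+$, so $\pi^G_*(X[\F_1])\cong\pi^s_*(B\Z/2_+)$; by Kahn--Priddy the transfer $\Sigma^\infty B\Z/2_+\to \SphereSpectrum$ hits $\eta$, and one checks that $[G/1]-2$ carries the generator of $\tilde\pi^s_1(B\Z/2)$ to $\eta\neq 0$. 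So $I(G)$ does \emph{not} annihilate $\pi^H_*(X[\F_1])$; at best it acts topologically nilpotently, and establishing that in general is essentially as hard as the statement you are trying to prove. (There is also a secondary issue: even granting annihilation, passing from ``$I$ kills homotopy'' to ``the Greenlees--May spectrum-level completion map is an equivalence'' needs the derived-functor description of $\pi_*(X^\wedge_I)$, not just a tower of quotients $X/I^n$.) The paper's proof of (a) avoids all of this: by Proposition \ref{prop:WeakGeometric} it suffices to check geometric fixed points, $\Phi^H(X[\F_1])$ is contractible for $H\neq 1$, and by Greenlees' Theorem \ref{thm:Greenlees} together with $\phi^1(I(G))=0$ (Lemma \ref{lem:nH}) the map $\Phi^1$ of the completion is completion at the zero ideal, hence an equivalence.

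For (b), the termwise-completion and the identification of the first term via (a) are fine, but the heart of the matter --- $X[\F_p,\F_1]^\wedge_{I(G)}\simeq X[\F_p,\F_1]^\wedge_p$ --- is exactly the step you leave unexecuted, and the proposed route has two problems. First, the algebraic input as you state it is wrong: for a $p$-group $P$ the ideals $I(P)$ and $(p)$ do \emph{not} define the same topology on $A(P)$, since $\phi^1(I(P))=0$ means the $\Z$-factor corresponding to the trivial subgroup is never $p$-completed; the topologies agree only away from the trivial subgroup, which is precisely why the theorem keeps $X[\F_1]$ uncompleted. Second, the Sylow reduction is not innocent: $X[\F_p,\F_1]$ is a $G$-spectrum, a weak equivalence must be detected on (geometric) fixed points for all subgroups of $G$, and restriction to a Sylow $p$-subgroup replaces $A(G)$ and $I(G)$ by a different ring and ideal, so compatibility of the two completions with restriction is itself a nontrivial claim needing proof. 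The paper's argument instead works directly with geometric fixed points: for $H$ a nontrivial $p$-group, $\phi^H(I(G))=(p^k)$ by Lemma \ref{lem:nH}, so by Theorem \ref{thm:Greenlees} the map $\Phi^H$ of the $I(G)$-completion of $X[\F_p,\F_1]$ is $p$-adic completion, while for all other $H$ the spectra $\Phi^H(X[\F_p,\F_1])$ are contractible; a comparison square involving the $I(G)$-, $(p)$- and $(I(G)+(p))$-adic completions then identifies the completion \emph{map} with the $p$-completion map, not merely the targets. Without that machinery (or an equivalent substitute) your outline does not close the gap.
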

Theorem \ref{mthm:X[F]Comp} makes precise the loosely worded statement above that the $p$-adic completion of  $X[\F_p]$ factors through the $I(G)$-adic completion, and the difference between them lies only in the subspectrum $X[\F_1]$, which is unchanged by completion. When $G$ is a $p$-group one has $X[\F_p] \simeq X$, and so one obtains the promised spectrum-level analogue of the May--McClure result.

As an application of these results we can now give an explicit description of the spectrum of stable maps between classifying spaces. This description is a generalization of the author's work on homotopy classes of stable maps between classifying spaces in \cite{KR:pSegal}, and indeed this was the motivation for the work in this paper.

Even before its proof, Lewis--May--McClure showed in \cite{LewisMayMcClure:Segal} that the Segal conjecture implies the completion theorem for the suspension spectrum of $B(G,K)$, the classifying space of principal $K$-bundles in the category of $G$-spaces. Their result was for finite groups $K$, and this was extended to compact Lie groups in \cite{MaySnaithZelewski:FurtherGeneralizationOfSegal}. The spectrum $\Stable_G B(G,K)_+$ is $G$-split, and there results a non-equivariant homotopy equivalence
 \[ (\Stable_G B(G,K)_+^G)^\wedge_{I(G)} \simeq F(BG_+,\Stable BK_+) \, .\]
There is a non-equivariant splitting (\cite{LewisMayMcClure:Segal,tomDieck:TransformationGroups})
\[  \Stable B(G,K)_+^G \simeq \bigvee_{[H,\varphi]} \Stable BW(H,\varphi)_+, \]
where the wedge sum runs over conjugacy classes of pairs $(H,\varphi)$ consisting of a subgroup $H \leq G$ and a homomorphism $\varphi \colon H \to K$, and $W(H,\varphi)$ is a certain subquotient of $G\times K$ (see Section \ref{sec:StableMaps} for details). Hence $F(BG_+,\Stable BK_+)$ is equivalent to the $I(G)$-adic completion of this wedge sum of classifying spaces, and this is usually considered the best available description. 

The problem with this point of view is that, except when $G$ is a $p$-group, there has been no way to compute this completion; the action of $A(G)$ on the wedge sum is not even understood. This problem can now be overcome: using the Theorem \ref{mthm:Replacement} one can restrict the wedge sum to subgroups of prime power order, and using Theorems \ref{mthm:Decomposition} and \ref{mthm:X[F]Comp}, the completion of each remaining classifying space can be described in terms of completion at the relevant prime, leading to the following result.

\begin{maintheorem} \label{mthm:StableMaps}
For a finite group $G$ and a compact Lie group $K$, there is a homotopy equivalence
\[ F(BG_+, \Stable BK_+) 
    \simeq  \bigvee_{\substack{[H,\varphi] \\ H \in \F_\mathbf{P}}} 
    \Stable (BW(H,\varphi)^{\wedge}_{p(H)} )_+ \, , 
\] 
where, for $H \in \F_\mathbf{P}\setminus \F_1$, $p(H)$ is the prime that divides $|H|$, and $p(1) = 0$.
\end{maintheorem}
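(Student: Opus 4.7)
The plan is to combine the completion theorem for $\Stable_G B(G,K)_+$ (from \cite{LewisMayMcClure:Segal, MaySnaithZelewski:FurtherGeneralizationOfSegal}) with the three structural results above, computing the target wedge-summand by wedge-summand along the tom Dieck splitting. Write $X \defeq \Stable_G B(G,K)_+$. The cited completion theorem together with $G$-splitness yields
\[ F(BG_+, \Stable BK_+) \;\simeq\; (X^G)^\wedge_{I(G)} \;\simeq\; (X^\wedge_{I(G)})^G, \]
so the task reduces to identifying the $G$-fixed points of $X^\wedge_{I(G)}$.

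First I would apply the Replacement Theorem to reduce to computing $(X[\F_\mathbf{P}]^\wedge_{I(G)})^G$. The key observation, standard from the tom Dieck splitting, is that smashing $X$ with $E\F_+$ kills precisely the summands whose isotropy falls outside $\F$, so
\[ X[\F]^G \;\simeq\; \bigvee_{\substack{[H,\varphi] \\ H \in \F}} \Stable BW(H,\varphi)_+ \]
for any family $\F$. Next, apply the Decomposition Theorem to express $X[\F_\mathbf{P}]^\wedge_{I(G)}$ as a homotopy colimit over all primes of the maps $X[\F_1]^\wedge_{I(G)} \to X[\F_p]^\wedge_{I(G)}$. For each prime (only those dividing $|G|$ matter), Theorem \ref{mthm:X[F]Comp} identifies the source with $X[\F_1]$ and fits the target in a cofiber sequence whose cofiber is $X[\F_p, \F_1]^\wedge_p$.

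Then I would take $G$-fixed points throughout. Fixed points preserve cofiber sequences and, for bounded-below spectra with finitely generated homotopy, commute with both $p$-completion and with the relevant homotopy colimit. The tom Dieck splittings for $X[\F_1]^G$ and $X[\F_p, \F_1]^G$ identify the outer terms of the resulting fixed cofiber as explicit wedges of $\Stable BW(H,\varphi)_+$ indexed by the appropriate isotropy classes, and the sequence splits because the first map is simply the inclusion of the $H = 1$ summands into the $H \in \F_p$ summands. Pushing the $p$-completion inside the finite wedge and using the classical fact that $(\Stable BW_+)^\wedge_p \simeq \Stable (BW^\wedge_p)_+$ for nilpotent $BW$ of finite type, each summand takes its stated form. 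Finally, the homotopy colimit over primes pushes out along the common $H = 1$ wedge summand, delivering exactly the wedge sum indexed by conjugacy classes $[H,\varphi]$ with $H \in \F_\mathbf{P}$, each completed at the appropriate prime (with $p(1) = 0$ corresponding to no completion).

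The main technical obstacle is a bookkeeping one: verifying that $G$-fixed points commute with both the homotopy colimit of the Decomposition Theorem and the $p$-completion appearing in Theorem \ref{mthm:X[F]Comp}, and matching the spectrum-level $p$-completion to the suspension spectrum of the space-level $p$-completion $BW(H,\varphi)^\wedge_{p(H)}$. The former should follow from bounded-below finiteness of the relevant spectra together with standard commutation, while the latter uses classical results on $p$-completion of suspension spectra of nilpotent spaces of finite type applied to the compact Lie subquotients $W(H,\varphi) \leq G\times K$.
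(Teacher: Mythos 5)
Your proposal follows essentially the same skeleton as the paper's proof: the May--Snaith--Zelewski completion theorem for $\Stable_G B(G,K)_+$ plus $G$-splitness identifies $F(BG_+,\Stable BK_+)$ with the $G$-fixed points of the $I(G)$-adic completion, and the structural theorems reduce everything to the families $\F_1$ and $\F_p$, with the summands identified by the Lewis--May--McClure type splitting (your ``standard consequence of the tom Dieck splitting'' is exactly Proposition \ref{prop:Splitting}, applied via $X[\F]\simeq \Stable_G B(\F,K)_+$). The organizational difference is that the paper does not rerun the Replacement/Decomposition argument and commute fixed points past the homotopy colimit by hand: since the splitting makes $X[\F_1]$ a wedge summand, Corollary \ref{cor:CompAsWedge}(b) gives the wedge $X[\F_1]\vee\bigvee_p (X[\F_p,\F_1])^{\wedge}_{p}$ in one step, and only fixed points of each summand remain to be identified. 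Your commutation claims are fine in substance, but the right justification is not ``finitely generated homotopy'': $(-)^G$ is exact and preserves wedges, hence preserves the finite pushout-type hocolim over $\calD$, and $p$-completion is a function spectrum out of an inflated Moore spectrum, so it passes through fixed points. You also omit the paper's last step upgrading the weak equivalence to a homotopy equivalence ($K$, hence $B(G,K)$, has the homotopy type of a CW complex, so the equivariant Whitehead theorem applies); that is minor.

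The one genuine gap is the final identification $(\Stable BW_+)^{\wedge}_{p}\simeq \Stable(BW^{\wedge}_{p})_+$ ``for nilpotent $BW$ of finite type''. First, $W(H,\varphi)$ is an arbitrary subquotient of $G\times K$: it need not be nilpotent, and it can have positive dimension, in which case the two sides genuinely differ (for $W$ a circle, $BW^{\wedge}_{p}$ is a $K(\Z_p,2)$, whose higher integral homology contains rational vector spaces, so its suspension spectrum is not $p$-complete, while the left side is). Second, even for finite $W$ the disjoint basepoint breaks the statement: the left side has $\pi_0\cong \Z_p$ while the right side has $\pi_0\cong \Z$. The theorem is only consistent with the Segal conjecture when the completion is read at the spectrum level: for $G=\Z/p$, $K=1$ and $H=G$ one has $W(H,\varphi)=1$, and the corresponding summand must be $\SphereSpectrum^{\wedge}_{p}$, not $\SphereSpectrum$, so that $\pi_0 F(BG_+,\SphereSpectrum)\cong \Z\oplus\Z_p\cong A(G)^{\wedge}_{I}$. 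This is how the paper treats its own notation: its proof stops at the wedge of $p$-completed suspension spectra $(\Stable BW(P,\varphi)_+)^{\wedge}_{p}$ and regards that as a rewriting of the statement. So drop the space-level translation (or restrict it to the cases where it is actually valid); with the right-hand side interpreted as spectrum-level completion, your argument goes through and coincides with the paper's.
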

Applying the functor $\pi_0$, one independently recovers \cite[Theorem B]{KR:pSegal}. One can also focus on a single prime $p$ to obtain a wedge sum description of $F(\pComp{BG}{}_+, \Stable BK_+)$ (see Corollary \ref{cor:CompAsWedge}), and applying $\pi_0$ to that description one independently recovers \cite[Theorem A]{KR:pSegal}. Taking $K =1$ in these results one obtains a description of the mapping duals of $BG_+$ and $\pComp{BG}_+$ (see Corollary \ref{cor:MappingDuals}.

\subsection*{Outline} Following this introduction are three preliminary sections with mostly background material. In Section \ref{sec:FamClSp} we review families of subgroups and their universal spaces. Section \ref{sec:ESHT} contains a very brief overview of equivariant stable homotopy theory, as well as a proof of the known but unpublished result that weak equivalences can be detected on geometric fixed-point spectra (Proposition \ref{prop:WeakGeometric}). Completion at ideals of the Burnside ring is reviewed in Section \ref{sec:Completions}. The real work begins in Section \ref{sec:Replacement}, where we introduce the restricted isotropy spectra $X[\F]$, investigate their basic properties and prove Theorem \ref{mthm:Replacement}. Theorems \ref{mthm:Decomposition} and \ref{mthm:X[F]Comp} are then proved in Section \ref{sec:Decomposition}. Finally, the application to stable maps between classifying spaces is treated in Section \ref{sec:StableMaps}.

\subsection*{Acknowledgments} The author is grateful to John Greenlees and Peter May for their permission to reproduce material from their overview article \cite{GreenleesMay:EqStHoTh} in Section \ref{sec:ESHT}, and for their help regarding the proof and historical context of Proposition \ref{prop:WeakGeometric}. The author also thanks Tony Elmendorf for an explanation of his functor realizing systems of fixed points in \cite{Elmendorf:SystemsOfFixedPoints}, which allows the functorial construction of the restricted isotropy spectra. The method of proof is heavily dependent on the results of Greenlees from \cite{Greenlees:EqFuncDuals}, and the author hopes this goes some way toward answering the question implicitly asked in the MathSciNet review of that paper.

% ************************************************************************
\section{Families of subgroups and classifying spaces} \label{sec:FamClSp}
% ************************************************************************
In this section, and throughout the paper, fix a finite group $G$. A \emph{family} $\F$ of subgroups of $G$ is a set of subgroups of $G$ that is closed under conjugacy in $G$ and taking subgroups. The particular families that will be used in this paper are $\F_1$, the family consisting solely of the trivial subgroup; $\F_p$, the family of $p$-subgroups for a prime $p$; $\F_\mathbf{P} = \bigcup_p \F_p$, the family of subgroups of prime power order; and $\F_{all}$, the family of all subgroups. 

Given a family $\F$, a $G$-space $X$ is said to be an \emph{$\F$-space} if the isotropy group of each point in $X$ belongs to $\F$. There is a distinguished $\F$-space $E\F$ with the universal property that each $\F$-space $X$ admits a unique homotopy class of maps $X \to E\F$. The space $E\F$ can be characterized by the fact that, for a subgroup $H \leq G$, the fixed-point space $E\F^H$ is contractible if $H \in \F$, and empty if $H \notin \F$. Of course, these properties only determine $E\F$ up to homotopy, and to be explicit $E\F$ will here be taken to mean the space constructed by Elmendorf in \cite{Elmendorf:SystemsOfFixedPoints}. Elmendorf describes a functor that, given a system of fixed points for subgroups of $G$, uses a categorical bar construction to construct a $G$-space realizing that system. The following lemma is a consequence of the construction.
\begin{lemma}\label{lem:EFnice}
For a family $\F$, the space $E\F$ is of finite type, and an inclusion of families $\F_a \hookrightarrow \F_b$ induces an inclusion of $G$-CW complexes $E\F_a \to E\F_b$, in particular a cofibration.
\end{lemma}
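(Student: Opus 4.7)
The plan is to unpack Elmendorf's bar construction directly and read both conclusions off its combinatorics. Elmendorf's functor takes a contravariant functor $\Phi \colon \mathcal{O}_G^{op} \to \mathcal{T}op$ on the orbit category and returns the geometric realization of a simplicial $G$-space whose $n$-simplices decompose as a coproduct over composable chains of morphisms in $\mathcal{O}_G$, with each summand a product of orbit $G$-sets and a copy of $\Phi$ evaluated at an endpoint of the chain. For a family $\F$, the relevant input $\Phi_\F$ assigns $pt$ to $G/H$ if $H \in \F$ and $\emptyset$ otherwise. Using that $\F$ is closed under subgroups and conjugation, together with the fact that morphisms in $\mathcal{O}_G$ encode subconjugacy, one sees that the non-empty summands in simplicial degree $n$ correspond exactly to chains of subgroups all of which lie in $\F$.

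For the finite type statement, I would use that $G$ is finite, so the orbit category has finitely many isomorphism classes of objects and finite morphism sets. Thus in each simplicial degree there are only finitely many indexing chains up to isomorphism, and each contributes finitely many $G$-orbits. Realizing the simplicial $G$-space and collapsing degeneracies in the standard way yields an explicit $G$-CW structure on $E\F$ with finitely many $G$-cells in each dimension.

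For the inclusion statement, an inclusion of families $\F_a \hookrightarrow \F_b$ is recorded as a natural transformation $\Phi_{\F_a} \to \Phi_{\F_b}$ which at each $G/H$ is either the identity or the unique map $\emptyset \hookrightarrow pt$, in particular a pointwise closed inclusion. This induces a simplicial map of bar constructions which in each simplicial degree is the inclusion of the sub-coproduct indexed by those chains landing in $\F_a$. Geometric realization preserves such inclusions of closed cells, so the induced map $E\F_a \to E\F_b$ is the inclusion of a sub-$G$-CW complex, and a fortiori a $G$-cofibration.

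The only non-routine point, and the one I would take most care with, is the combinatorial observation that closure of $\F$ under subgroups forces an entire bar-construction chain to remain in $\F$ as soon as its terminal vertex does; everything else is bookkeeping within Elmendorf's construction and the standard CW structure on the realization of a proper simplicial $G$-space.
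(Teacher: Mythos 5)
Your proposal is correct and takes essentially the same route as the paper, which simply records the lemma as a consequence of Elmendorf's bar construction; you have only made the combinatorics explicit, and your key observation is right given Elmendorf's conventions, since the coefficient system is evaluated at the vertex that dominates the chain under subconjugacy, so closure of $\F$ under subgroups and conjugation does force every surviving chain to lie in $\F$. Finiteness of the orbit category of a finite $G$ then gives finite type, and the levelwise injection induced by $\F_a \hookrightarrow \F_b$ realizes to a $G$-CW subcomplex inclusion, hence a cofibration, exactly as you say.
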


The homotopy description of universal implies that $E\F_1 \simeq EG$ and $E\F_{all} \simeq pt$ as $G$-spaces. The spaces $E\F_p$ and $\F_\mathbf{P}$ are difficult to get a direct handle on, but they do relate to each other via a colimit description, which we will use throughout the paper. 

\begin{definition}\label{def:Diagram}
Let $\calD$ be the category whose objects are the families $\F_1$ and $\F_p$, for all primes $p$, and whose morphisms are the inclusions $\F_1 \hookrightarrow \F_p$.
\end{definition}

\begin{lemma} \label{lem:EFColimit}
The natural map  
\[ \colim_{\F \in \calD} E\F \longrightarrow E\F_\mathbf{P}   \]
is a homeomorphism.
\end{lemma}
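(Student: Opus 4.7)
My plan is to exploit the explicit combinatorial structure of Elmendorf's construction of $E\F$ to identify the two sides as $G$-CW complexes with matching cell structures. Elmendorf realizes a $G$-space from fixed-point data via a categorical bar construction indexed by the orbit category restricted to orbits $G/H$ with $H \in \F$; the cells correspond to composable chains of morphisms there. By Lemma~\ref{lem:EFnice}, each inclusion $\F_1 \hookrightarrow \F_p$ induces a $G$-CW subcomplex inclusion $E\F_1 \hookrightarrow E\F_p$, so the colimit on the left is computed on underlying $G$-spaces as an honest pushout-type colimit of $G$-CW complexes along cofibrations, yielding again a $G$-CW complex; the canonical map to $E\F_\mathbf{P}$ is then automatically $G$-equivariant and cellular.

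The heart of the argument is a group-theoretic observation. A morphism $G/H_0 \to G/H_1$ in the orbit category exists precisely when $H_0$ is subconjugate to $H_1$. Hence if $H_0 \in \F_p$ and $H_1 \in \F_{p'}$ for distinct primes $p \neq p'$, then $H_0$ would be a $p$-group contained in a conjugate of a $p'$-group, forcing $H_0 = 1$. An inductive extension of this to composable chains shows that every composable chain of morphisms in the orbit category restricted to $\F_\mathbf{P}$ lies entirely in the restricted orbit category of $\F_p$ for some single prime $p$; and chains that lie simultaneously in two different $\F_p$ and $\F_{p'}$ must use only objects $G/1$, i.e., must lie in the restricted orbit category of $\F_1$.

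With this combinatorial fact in hand, the indexing datum for Elmendorf's bar construction of $E\F_\mathbf{P}$ is literally the strict colimit, over the diagram $\calD$, of the indexing data for the $E\F$. Since geometric realization commutes with colimits and the assembly maps are $G$-cellular inclusions by the first paragraph, the natural map $\colim_{\F \in \calD} E\F \to E\F_\mathbf{P}$ is a $G$-cellular bijection of $G$-CW complexes matching cell-by-cell, hence a $G$-homeomorphism.

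The main obstacle will be rigorously verifying that Elmendorf's explicit model realizes this decomposition on the nose rather than merely up to $G$-equivariant weak equivalence — one needs to track how the bar construction encodes cells from chains in the orbit category and confirm that the cellular attaching data for $E\F_\mathbf{P}$ agrees with those of the pushout-colimit along $E\F_1$. Once the chain-level combinatorial observation above is established, however, this reduces to a functoriality argument for Elmendorf's construction combined with the cofibration conditions supplied by Lemma~\ref{lem:EFnice}.
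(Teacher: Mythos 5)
Your proof is correct, and it rests on the same underlying idea as the paper's — Elmendorf's functor is a bar construction, hence a colimit-type construction, and colimits commute — but you carry out the verification at the output rather than the input. The paper's proof is two sentences: the system of fixed points for $\F_\mathbf{P}$ is the objectwise colimit over $\calD$ of the systems for $\F_1$ and the $\F_p$ (trivial, since a nontrivial subgroup of prime-power order lies in exactly one $\F_p$), and the categorical bar construction preserves colimits in the coefficient data (levelwise it is a coproduct of products of the coefficient values with fixed spaces, and geometric realization preserves colimits). You instead decompose the output: your key combinatorial observation — that in any composable chain of orbits with all isotropy in $\F_\mathbf{P}$ the groups are subconjugate to the top one, hence all $p$-groups for a single prime, with overlaps between different primes confined to the trivial group — identifies the simplicial levels (equivalently, the cells) of the bar construction for $\F_\mathbf{P}$ with the colimit over $\calD$ of those for the $E\F$, and realization along the closed inclusions supplied by Lemma~\ref{lem:EFnice} then yields the homeomorphism. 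Your route buys an explicit cell-level picture of how $E\F_\mathbf{P}$ is assembled from the $E\F_p$ glued along $E\F_1$, at the cost of the chain-level bookkeeping you flag yourself; the paper's check at the level of input fixed-point systems avoids the single-prime chain lemma altogether and is shorter, though less geometrically explicit. Both arguments are valid.
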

\begin{proof} The system of fixed points for $\F_\mathbf{P}$ is the colimit of the systems of fixed points of the families in $\calD$. The point is now that the categorical bar construction is a big colimit construction, and colimits commute.
\end{proof}

Although a direct proof is easy in this case, it is fitting to attribute Lemma \ref{lem:EFColimit} to Piacenza, who in \cite{Piacenza:HomotopyTheoryOfDiagrams} showed that Elmendorf's functor in \cite{Elmendorf:SystemsOfFixedPoints} is a Quillen equivalence from the systems of fixed points to $G$-spaces.

The colimit in Lemma \ref{lem:EFColimit} is taken over the diagram
\[ 
\xymatrix{                 &              &E\F_1 \ar[1,-2] \ar[1,-1] \ar[1,1]      &    &  \\
           E\F_{p_1}       &E\F_{p_2}     &\cdots     &E\F_{p_r}  &\cdots 
        }
\]
where $p_1, p_2, \dots$ is an enumeration of the primes. Thus $E{\F_\mathbf{P}}$ is very close to being the coproduct of the spaces $E\F_p$: the difference is that each $E\F_p$ contains $E\F_1$ as a subspace, and the colimit construction is needed to keep track of this. Alternatively, one can describe $E\F_\mathbf{P}$ as a coproduct in the category of $G$-spaces under $E\F_1$, but there does not seem to be much to gain from this. 

We also observe that $E\F_p = E\F_1$ for primes $p$ that do not divide the order of $G$, and so it suffices to take the colimit in Lemma \ref{lem:EFColimit} over primes that divide the order of $G$. This is nice since one then obtains a finite colimit. For theoretical reasons it is nevertheless convenient to include all primes, as that makes the construction natural in $G$.

\section{Equivariant stable homotopy theory} \label{sec:ESHT}
The subject of equivariant stable homotopy theory is much too vast to cover in any detail in this article.  An excellent overview of the subject is provided in \cite{GreenleesMay:EqStHoTh}, and a detailed account can be found in \cite{LewisMaySteinbergerMcClure:EqStHoTh}. In this section we paraphrase passages from \cite{GreenleesMay:EqStHoTh} with the aim that a reader familiar with the framework for non-equivariant stable homotopy in \cite{EKMM} can follow the discussion in the paper, without going into much technical detail. The author is grateful to John Greenlees and Peter May for the permission to reproduce their work here.
In Proposition \ref{prop:WeakGeometric} we also fill a gap in the literature by presenting a proof that actual fixed points and geometric fixed points give rise to the same notion of weak equivalence.

Fix a \emph{complete $G$-universe $U$}. This means $U$ is a real inner product space on which $G$ acts by linear isometries, and $U$ decomposes as a sum of countably many copies of each irreducible representations of $G$. A \emph{$G$-spectrum} $E$ (indexed on $U$) is then a collection of based $G$-spaces $E_V$, one for each finite-dimensional $G$-subspace $V \subseteq U$, equipped with compatible $G$-homeomorphisms
  \[ E_V \xrightarrow{\cong} \Omega^{W-V}E_{W-V} = \Map(S^{W-V},E_{W-V}),  \]
one for each subspace inclusion $V \subset W$, where $W-V$ is the orthogonal complement of $V$ in $W$, and $S^{W-V}$ is the one-point compactification of $W-V$. (Observe that $S^{W-V}$ is a sphere of dimension $\dim W - \dim V$, with $G$-action inherited from $W$.) A map $E \to E'$ of $G$-spectra is a collection of $G$-space maps $E_V \to E'_V$, compatible with the structure maps. Equivariant suspension spectra ($\Stable_G X$), smash products ($X \wedge Y$), and function spectra ($F(X,Y)$) are defined, and behave, much as in the non-equivariant setting. Consequently one obtains mapping cones, mapping cylinders and homotopies in the standard way. For $G$-spectra $E$ and $F$, we write $[E,F]_G$ for the set of homotopy classes of maps $E \to F$. 

Every $G$-spectrum has an \emph{underlying non-equivariant spectrum}. For a $G$-spectrum $E$, this is obtained by first pulling $E$ back along the inclusion of fixed points $i \colon U^G \hookrightarrow U$ to obtain the \emph{naive $G$-spectrum} $i^*E$ indexed on the $G$-fixed universe $U^G$, and then forgetting the $G$-action. A \emph{non-equivariant homotopy equivalence} then means a homotopy equivalence between underlying non-equivariant spectra.

In equivariant homotopy theory, $G$-orbits play the role of points, and therefore one must consider the generalized sphere spectra $G/H_+ \sma \SphereSpectrum_G^n$, where $\SphereSpectrum_G^n$ is the $n$-fold  suspension of the \emph{$G$-equivariant sphere spectrum} $\SphereSpectrum_G \defeq \Stable_G S^0$. Consequently the \emph{homotopy groups} of a $G$-spectrum $E$ are defined by setting
 \[ \pi^H_n(E) \defeq [G/H_+ \sma \SphereSpectrum_G^n, E] \,  \]
for $H \leq G$ and $n \in \Z$. This gives rise to a Mackey functor 
\[\pi_n(E) \colon H \mapsto  \pi^{H}_n(E) \, .\] 
A spectrum $E$ is \emph{bounded below} if there exists an integer $N$ such that $\pi_n(E) = 0$ for $n < N$.

A map between $G$-spectra is a \emph{weak equivalence} if  it induces an isomorphism on all homotopy groups. \emph{Cofibrations} and \emph{fibrations} are defined via the homotopy extension and homotopy lifting properties, respectively (here we ignore model structures and these terms are meant in the classical sense). As in the non-equivariant world, cofibration sequences coincide with fibration sequences up to sign and weak homotopy. It follows that fibre and cofibre sequences of $G$-spectra both give rise to a long exact sequence of homotopy groups, regarded as Mackey functors or (equivalently) one orbit at a time.

Again, since $G$-orbits play the role of points, it is appropriate to regard the $G$-spectra \mbox{$G/H_+ \sma C\SphereSpectrum_G^n$}, where $C\SphereSpectrum_G^n = I \sma \SphereSpectrum_G^n $ is the cone on $\SphereSpectrum_G^n$, as generalized cells. \emph{$G$-CW spectra} are then constructed by attaching such cells along the boundary sphere $G$-spectra  $G/H_+ \sma \SphereSpectrum_G^n$. The standard results for CW-spectra carry over to the equivariant world: We have an \emph{equivariant Whitehead theorem} (a weak equivalence between $G$-CW spectra is a homotopy equivalence), cellular approximation of maps and homotopies, and approximation by $G$-CW spectra. The last result provides a functor $\Gamma$ that assigns a $G$-CW spectrum $\Gamma E$ to a $G$-spectrum $E$, and comes equipped with a natural weak equivalence $\gamma \colon \Gamma E \to E$. We say that a $G$-spectrum is of \emph{finite type} if it is weakly equivalent to a $G$-CW spectrum with finitely many cells in each dimension.

For a subgroup $H \leq G$, one can regard the complete $G$-universe $U$ as a complete $H$-universe, and this gives rise to a forgetful functor from $G$-spectra to $H$-spectra. We denote this functor by $i_H^*$, or $i^*$ when there is no danger of confusion. The restriction functors preserve weak equivalences and cofibre sequences, and have left and right adjoint functors, which will not be discussed here.

Fixed points of $G$-spectra have to be handled with some care. It is tempting to form fixed-point spectra by taking fixed points at the space level. The problem is that this is not compatible with the structure maps in the spectrum, as $\Map(S^{W-V},(E_{W-V})^G)$ is generally not homeomorphic to  $\Map(S^{W-V},E_{W-V})^G$ when $G$ acts non-trivially on $W$. One way around this problem is to restrict indexing to subspaces with trivial $G$-action and consider the naive $G$-spectrum $i^*E$. Taking space-level fixed points of $i^*E$ one gets a (non-equivariant) spectrum $(i^*E)^G$, indexed on on the $G$-fixed universe $U^G$, and the fixed-point spectrum $E^G$ is defined by $E^G \defeq (i^*E)^G$. The fixed-point-spectrum functor satisfies the expected adjunction relations, and is used to define equivariant homology and cohomology groups. However it does not mirror the unstable fixed-point functor (or preserve smash products) as information about representations of $G$ is built into $\Stable_G X$, even when $X$ has trivial $G$-action. Instead, one has the homotopy equivalence
\begin{equation} 
  (\Stable_G X_+)^G \simeq \bigvee_{(H)} \Stable(EW_G(H)_+ \wedge_{W_G(H)} X^H),
\end{equation}
originally due to tom Dieck \cite{tomDieck:TransformationGroups}, where the wedge sum runs over conjugacy classes of subgroups of $H$, and $W_G(H) = N_G(H)/H$ is the Weyl group of $H$ in $G$.

An alternative approach to fixed-point spectra is taking space-level fixed points of a $G$-spectrum, and then spectrifying the resulting prespectrum. This construction induces the \emph{geometric fixed-point spectrum} functor $\Phi^G$, so named because it satisfies $\Phi^G(E \wedge E') \simeq \Phi^G(E) \wedge \Phi^G(E')$ for $G$-spectra $E, E'$, and $\Phi^G(\Stable X) \simeq \Stable(X^G)$ for a based $G$-space $X$, in line with geometric intuition. It does not, however, satisfy the adjunction relation one demands of a fixed-point functor, and so plays a secondary, but important, role in equivariant stable homotopy theory. Formally, the geometric fixed-point spectrum is defined by a separation of isotropies. That is, one lets $\calP$ be the family of proper subgroups in $G$, and defines $\widetilde{E}\calP$ to be the cofibre of the natural map $E\calP_+ \to {E\F_{all}}_+ \simeq S^0$. For a $G$-spectrum $E$, one can informally think of the smash product $\widetilde{E} \calP \sma E$ as removing orbits whose isotropy groups are proper subgroups of $G$. Geometric intuition dictates that such orbits should have no $G$-fixed points (although this is false in equivariant stable homotopy theory), and this motivates the definition of the geometric fixed-point spectrum as  
 \[ \Phi^G(E) \defeq (\widetilde{E} \calP \sma E )^G \, .\]

For a subgroup $H \leq G$, suitable adaptations of the above constructions yield a fixed-point functor $(-)^H$ and a geometric fixed-point functor $\Phi^H$, both taking values in $W_G(H)$-spectra. We will only consider fixed-point spectra non-equivariantly, so it suffices to know that there are non-equivariant equivalences $E^H \simeq (i^*E)^H$ and $\Phi^H(E) \simeq \Phi^H(i^*E)$, where $i^*E$ is the restriction of $E$ to an $H$-spectrum. (A more elaborate construction is needed to obtain the $W_G(H)$-spectrum structure.) The $H$-fixed-point spectra of a $G$-spectrum $E$ satisfy the adjunction
 \[ \pi^H_*(E) \cong \pi_*(E^H)\, .\]
It follows immediately that a map between $G$-spectra is a weak equivalence if and only if it induces non-equivariant weak equivalences on all fixed-point spectra. It is known to experts that this remains true if one looks at geometric fixed-point spectra instead of actual fixed-point spectra, although no proof seems to appear in the literature. We remedy this in the following proposition, which was originally proved, but not published, by Gaunce Lewis, and also independently discovered by John Greenlees. An outline of the proof presented here was communicated to the author by Peter May.  
 
\begin{proposition} \label{prop:WeakGeometric}
 For a map \mbox{$f \colon X \to Y$} between $G$-spectra, the following are equivalent.
  \begin{itemize}
    \item[
 		(i)] $f$ is a weak equivalence.
 		\item[(ii)] For each $H \leq G$, the map $f^H$ is a non-equivariant weak  equivalence.
 		\item[(iii)] For each $H \leq G$, the map $\Phi^H(f)$ is a non-equivariant weak  equivalence.
	\end{itemize}
\end{proposition}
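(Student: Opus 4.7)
The equivalence of (i) and (ii) is immediate from the adjunction $\pi_n^H(E) \cong \pi_n(E^H)$ stated just before the proposition. For (ii) $\Leftrightarrow$ (iii), my plan is to exploit a single isotropy-separation cofibration sequence together with an induction on $|H|$.

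The main device is the cofibration sequence of $H$-spectra
\[ (E\mathcal{P}_H)_+ \sma i_H^* X \longrightarrow i_H^* X \longrightarrow \widetilde{E}\mathcal{P}_H \sma i_H^* X \]
obtained by restricting the universal isotropy-separation cofibration for the family $\mathcal{P}_H$ of proper subgroups of $H$. Taking $H$-fixed points yields a non-equivariant cofibration sequence
\[ \bigl((E\mathcal{P}_H)_+ \sma i_H^* X\bigr)^H \longrightarrow X^H \longrightarrow \Phi^H(X), \]
natural in $X$, so the map $f$ induces a comparison of the corresponding sequences for $X$ and $Y$.

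The key technical lemma I need is that the functor $X \mapsto \bigl((E\mathcal{P}_H)_+ \sma i_H^* X\bigr)^H$ sends any map whose $K$-fixed points are non-equivariant weak equivalences for every proper subgroup $K \lneq H$ to a non-equivariant weak equivalence. This is a cellular-filtration argument: $E\mathcal{P}_H$ carries an $H$-CW structure whose cells are orbits $H/K$ with $K \lneq H$, and smashing with $f$ followed by the $H$-fixed-point functor yields a filtration whose associated graded pieces, up to suspension, are of the form $(H/K_+ \sma f)^H$; each of these is controlled by $f^K$ via the standard identification of $H$-fixed points of an orbit-indexed spectrum, and all such $f^K$ are weak equivalences by hypothesis. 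Standard inductive and colimit arguments over the filtration then complete the step without any bounded-below hypothesis.

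Granted this lemma, the equivalence (ii) $\Leftrightarrow$ (iii) follows from the five-lemma applied to the cofibration sequence above. The direction (ii) $\Rightarrow$ (iii) needs no induction: the lemma applied to $f$ gives a weak equivalence on the first term, so $f^H$ being a weak equivalence forces $\Phi^H(f)$ to be one. The converse (iii) $\Rightarrow$ (ii) proceeds by induction on $|H|$; the base case $H = 1$ holds because $\mathcal{P}_1 = \emptyset$ gives $\Phi^1(X) \simeq X^1$, and in the inductive step the hypothesis supplies $f^K$ as a weak equivalence for every proper $K \lneq H$, so the lemma again gives a weak equivalence on the first term and $\Phi^H(f)$ being a weak equivalence by (iii) then forces $f^H$ to be one. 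I expect the main obstacle to be the cellular-filtration lemma itself, specifically the analysis of $(H/K_+ \sma Z)^H$: it requires some care about the interplay between naive and genuine $H$-spectra, even though the underlying computation is standard equivariant folklore.
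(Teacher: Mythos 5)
Your argument is correct, and it follows the same isotropy-separation skeleton as the paper's proof, but with the key input handled differently. The paper gets (i)$\Rightarrow$(iii) purely formally (weak equivalences are isomorphisms in the homotopy category, and $\Phi^H$ is constructed from restriction, smashing with $\widetilde{E}\calP$ and fixed points), reduces to $G$-CW spectra, passes to the cofibre $Z$ of $f$, and then invokes \cite[Proposition 9.2]{LewisMaySteinbergerMcClure:EqStHoTh} to conclude that $Z \to \widetilde{E}\calP \sma Z$ is a $G$-equivalence when $Z^H$ is weakly contractible for all proper $H$, so that $Z^G \simeq \Phi^G(Z)$; the induction is phrased as induction on the group $G$ via restriction to proper subgroups, which is equivalent to your induction on $|H|$. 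You instead compare the two cofibre sequences $\bigl((E\calP_H)_+ \sma i_H^*X\bigr)^H \to X^H \to \Phi^H(X)$ directly and prove the needed statement about $\bigl((E\calP_H)_+ \sma -\bigr)^H$ yourself, by a skeletal filtration of $E\calP_H$ together with the identification $\pi_*^H(H/K_+ \sma Z) \cong \pi_*^K(Z)$ (self-duality of orbits for finite groups); this is in substance a proof of the special case of the cited LMSM result that is actually used. What your route buys is self-containedness: no appeal to the external proposition, no $G$-CW approximation step (everything is read off equivariant homotopy groups, wedges and sequential colimits of cofibrations), and both implications (ii)$\Rightarrow$(iii) and (iii)$\Rightarrow$(ii) fall out of one five-lemma comparison. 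What the paper's route buys is brevity and the formal observation that (i)$\Rightarrow$(iii) needs no computation at all. Do make sure, when writing this up, that the orbit identification is stated for genuine $H$-spectra (Wirthmüller/duality), since that is exactly the point you flag; with that in place there is no gap.
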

\begin{proof}
The equivalence between (i) and (ii) has already been established. The implication (i) $\Rightarrow$ (iii) is purely formal: A weak equivalence is an isomorphism in the homotopy category of $G$-spectra (obtained by formally inverting weak equivalences). It follows that the restriction to $H$-spectra $i^*f \colon i^*X \to i^*Y$ is an isomorphism in the homotopy category of $H$-spectra. This remains true after smashing with $\widetilde{E}\calP$, where $\calP$ is the family of proper subgroups of $H$. Taking $H$-fixed points, we get an isomorphism in the homotopy category of spectra,
\[ ( \widetilde{E}\calP \sma {i^*X} )^H \xrightarrow{(1 \sma {i^*f} )^H} (\widetilde{E}\calP \sma {i^*Y})^H, \]
which is the same as a non-equivariant weak equivalence $\Phi^H(X) \xrightarrow{\Phi^H(f)} \Phi^H(Y)$. 

It now suffices to prove the implication (iii) $\Rightarrow$ (ii). By taking $G$-CW approximations if necessary, the implication (i) $\Rightarrow$ (iii) allows us to assume, without loss of generality, that $X$ and $Y$ are $G$-CW spectra. We now proceed by induction on $G$, noting that the claim is trivially true when $G$ is the trivial group. We may then inductively assume that the claim is true for all proper subgroups $H < G$. In particular this means that restricting to $H$-spectra yields non-equivariant weak equivalences 
 \[ (i^*_H X)^H \xrightarrow{(i^*_H f)^H} (i^*_H Y)^H,  \]
which non-equivariantly is the same as weak equivalences 
 \[ X^H \xrightarrow{ f^H}  Y^H.  \] 
Thus it remains only to prove that $f^G$ is a weak equivalence. Letting $Z$ be the cofibre of the map $f \colon X \to Y$, this is equivalent to showing that $Z^G$ is weakly contractible. 

Consider the map $Z \cong S^0 \sma Z  \xrightarrow{\iota \sma 1}  \widetilde{E}\calP \sma Z$ where $\iota$ is the map in the cofibre sequence $E\calP_+ \to S^0 \xrightarrow{\iota} \widetilde{E}\calP$. Since $Z^H$ is weakly contractible for every $H \in \calP$, \cite[Proposition 9.2]{LewisMaySteinbergerMcClure:EqStHoTh} implies that $\iota \sma 1$ is a $G$-equivalence, and in particular $Z^G \simeq (\widetilde{E}\calP \sma Z)^G = \Phi^G(Z)$. But $\Phi^G(Z)$ is the cofibre of the map $\Phi^G(X) \xrightarrow{\Phi^G(f)} \Phi^G(Y), $ which is a non-equivariant weak equivalence and hence $\Phi^G(Z)$ is weakly contractible. It follows that $Z^G$ is weakly contractible, completing the induction and the proof.
\end{proof}
 
Finally, a $G$-spectrum $E$ is \emph{$G$-split} if the natural inclusion of $E^G$ into the underlying non-equivariant spectrum of $E$ splits up to homotopy. In other words, $E$ is $G$-split if there is a non-equivariant map $i^*E \to E^G$, whose composite with the non-equivariant inclusion $E^G \hookrightarrow i^*E$ is homotopic to the identity. $G$-split $G$-spectra play an important role in equivariant stable homotopy theory because of their interaction with \emph{free $G$-spectra}, meaning $G$-spectra that are equivalent to $G$-CW spectra built out of free $G$-cells. To keep things simple, we explain this interaction only for free $G$-CW complexes, in which case the result is due to Kosniowski \cite{Kosniowski:EquivariantCohomologyStableCohomotopy}, although a proof is also given in \cite{MayMcClure:Segal}. 

\begin{theorem} \cite{Kosniowski:EquivariantCohomologyStableCohomotopy,MayMcClure:Segal} \label{thm:Split}
If $E$ is a $G$-split $G$-spectrum and $X$ is a free $G$-CW complex, then there is a natural isomorphism
\[ E^n_G(X) \xrightarrow{\cong} E^n(X/G), \]
where $E^*$ denotes the cohomology theory represented by the underlying non-equivariant spectrum of $E$. 
\end{theorem}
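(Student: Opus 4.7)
The plan is to establish the isomorphism by treating both sides as cohomology theories on the category of free $G$-CW complexes, building a natural comparison map that uses the $G$-split structure essentially, and verifying the comparison on the generating cells (free orbits), from which the general case follows by cellular induction.

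First I would verify that both $X \mapsto E^n_G(X)$ and $X \mapsto E^n(X/G)$ define cohomology theories on free $G$-CW complexes: each is homotopy-invariant, sends cofibration pairs of free $G$-CW complexes to long exact sequences, and sends wedges to products. For the right-hand side this uses that the orbit functor $X \mapsto X/G$ preserves cofibrations among free $G$-CW complexes, so cellular structures descend cleanly.

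Next I would construct the natural comparison $\Phi \colon E^n_G(X) \to E^n(X/G)$. Given an equivariant representative $f \colon X_+ \to \Sigma^n E$, take the underlying non-equivariant map $i^* f$ and postcompose with the $G$-split retraction $r$ to land in $\Sigma^n E^G$. Since $E^G$ carries the trivial $G$-action, the resulting composite is $G$-invariant up to homotopy and descends through the orbit projection $\pi \colon X_+ \to (X/G)_+$ to a map $\bar f \colon (X/G)_+ \to \Sigma^n E^G$, which we then push forward via $\iota \colon \Sigma^n E^G \hookrightarrow \Sigma^n i^*E$ to represent an element of $E^n(X/G)$. Naturality in $X$ is clear from the construction. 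On the generating free orbit $X = G_+$ (so $X/G = pt$) both sides compute $\pi_{-n}(i^*E)$, on the left via the adjunction $G_+ \wedge (-) \dashv i^*$ and on the right trivially; tracing $\Phi$ through the $G$-split identity $r \circ \iota \simeq \mathrm{id}_{E^G}$ shows it induces the identity on $\pi_{-n}(i^*E)$.

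For a general free $G$-CW complex $X$, proceed by induction on the cellular filtration: attaching a free cell $G \times D^k$ along $G \times S^{k-1}$ induces matching cofibration sequences on both sides of $\Phi$, and the five lemma propagates the iso through each stage, with infinite-dimensional $X$ handled by a Milnor $\lim^1$ argument on the tower of skeleta. The main technical obstacle is the descent of $r \circ i^* f$ through $\pi$: although ``$E^G$ has trivial $G$-action'' is heuristically convincing, pinning this down at the spectrum level requires either choosing a homotopically suitable model of $r$ compatible with the $G$-action on $i^*E$, or replacing the direct descent by a transfer-based construction using the equivariant stable transfer $\tau \colon (X/G)_+ \to X_+$ associated to the free $G$-action together with the adjunctions between trivial-action and fixed-point functors.
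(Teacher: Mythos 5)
A preliminary remark: the paper does not prove this statement at all --- it is quoted from Kosniowski and from May--McClure --- so there is no internal proof to compare against; your outline (both sides are cohomology theories on free $G$-CW complexes, build a natural transformation, check it on the free orbit, induct over skeleta with a Milnor $\lim^1$ step) is indeed the classical strategy of those sources. But as it stands your sketch has two genuine gaps, both located at the heart of the argument rather than in the bookkeeping. First, the construction of the comparison map by ``descent'' does not work: the splitting is only a map of non-equivariant spectra, so $r\circ i^*f$ carries no equivariance whatsoever, and nothing in the splitting data even gives $r\circ g\simeq r$ for $g\in G$; moreover, homotopy-invariance without coherence data would at best produce a map out of the homotopy orbit object, not out of $X/G$. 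Since constructing the natural transformation is the actual content of the theorem, deferring this to ``choose a suitable model of $r$'' is not a proof. The honest repairs are exactly the ones you gesture at: either send $f\colon \Sigma^\infty_G X_+\to \Sigma^n E$ to its composite with the equivariant transfer $\tau\colon \Sigma^\infty_G (X/G)_+\to \Sigma^\infty_G X_+$, pass to the adjoint map $(X/G)_+\to \Sigma^n E^G$ under the adjunction between the trivial-action functor and $(-)^G$, and push forward along $\iota\colon E^G\to i^*E$; or (May--McClure's route) build the transformation in the opposite direction $E^n(X/G)\to E^n_G(X)$ by applying the splitting map $i^*E\to E^G$, using the same adjunction, and pulling back along the equivariant projection $X_+\to (X/G)_+$, which involves no descent problem at all. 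In either case the verification on the orbit $G_+$ becomes a different computation from the one you sketch (for the transfer version it is a tom Dieck splitting/double coset analysis).

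Second, you have the splitting identity in the wrong direction, and this matters precisely in that orbit check. $G$-splitness gives a non-equivariant map $\zeta\colon i^*E\to E^G$ whose composite $i^*E\xrightarrow{\zeta} E^G\xrightarrow{\iota} i^*E$ is homotopic to $\mathrm{id}_{i^*E}$; that is, the underlying spectrum is a retract of the fixed-point spectrum. The identity you invoke, $r\circ\iota\simeq \mathrm{id}_{E^G}$, would instead make $E^G$ a retract of $i^*E$, which is false in all the motivating examples (the equivariant sphere, equivariant $K$-theory, suspension spectra of $G$-spaces), whose fixed-point spectra are strictly larger than their underlying spectra; under that reading the theorem would be vacuous for the applications in this paper. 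Concretely, tracing your $\Phi$ on $X=G_+$ gives the self-map of $\pi_{-n}(i^*E)$ induced by $\iota\circ r$, which your identity only shows to be idempotent up to homotopy, not the identity; it is the composite $\iota\circ\zeta\simeq \mathrm{id}_{i^*E}$ that makes the orbit map an isomorphism. Once the comparison map is rebuilt as above and the splitting is used in the correct direction, the remaining cell-induction, wedge axiom, and $\lim^1$ portions of your argument are fine.
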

 
A more general version of the theorem, where $X$ is allowed to be a free naive $G$-spectrum is proved in \cite{LewisMaySteinbergerMcClure:EqStHoTh}, but this will not be needed here. 
 
\section{The Burnside ring and completion theorems} \label{sec:Completions}
The \emph{Burnside ring} of a finite group $G$, denoted $A(G)$, is the Grothendieck group of isomorphism classes of finite left $G$-sets. Its additive and multiplicative structures are induced by disjoint union and cartesian product, respectively. As a $\Z$-module, the Burnside ring is generated by the transitive $G$-sets.  Thus, letting $[X]$ denote the isomorphism class of a $G$-set $X$, $A(G)$ has one basis element $[G/H]$ for each conjugacy class of subgroups $H \leq G$. 

\subsection{Algebraic completion} 

For an ideal $J \subseteq A(G)$ and an $A(G)$-module $M$, the $J$-adic completion of $M$ is defined by
\[ M^{\wedge}_J \defeq \invlim~ \left(M/JM \leftarrow M/J^2M \leftarrow M/J^3M \leftarrow \cdots \right) \]
As we are taking $G$ to be finite, $A(G)$ is Noetherian, so when $M$ is finitely generated the Artin--Rees Lemma applies and we have $M^{\wedge}_J \cong A(G)^{\wedge}_J \otimes_{A(G)} M$. Moreover, $A(G)^{\wedge}_J$ is flat over $A(G)$, and thus $J$-adic completion commutes with finite colimits for finitely generated $A(G)$-modules.

We will primarily be interested in completion at the augmentation ideal of $A(G)$, defined as follows. The \emph{augmentation} of $A(G)$ is the homomorphism $\epsilon$  
defined on isomorphism classes of $G$-sets by setting
 \[ \epsilon([X]) = |X| \, ,\]
where $|X|$ denotes the cardinality of $X$, and then extending to a $\Z$-linear homomorphism 
\[\epsilon\colon A(G) \to \Z \, .\] 
As usual, the \emph{augmentation ideal} $I = I(G)$ is defined as the kernel of the augmentation:
\[ I = I(G) \defeq{\ker{\epsilon}} \, . \]

\subsection{Cohomological completion theorems}
The main motivation for considering completion at the augmentation ideal is the Segal Conjecture, which stipulates that the natural map
 \[ A(G)^\wedge_I \xrightarrow{\cong} \pi^0(BG_+), \]
where $\pi^*$ denotes the cohomotopy functor, is an isomorphism. The conjecture was settled by Carlsson in \cite{Carlsson:SegalConjecture} where he bridged the gap between the results obtained in \cite{AdamsGunawardenaMiller,CarusoMayPriddy} and \cite{MayMcClure:Segal}. Carlsson actually proved the ``strong Segal conjecture'', describing also the higher cohomotopy groups of $BG_+$ as an $I$-adic completion. His theorem has since undergone a series of generalizations in
\cite{AdamsHaeberlyJackowskiMay:GeneralizationOfSegal,LewisMayMcClure:Segal,MaySnaithZelewski:FurtherGeneralizationOfSegal} to, amongst other things, give a description of the homotopy groups of a mapping spectrum $F(BG_+,\Stable {BK_+})$ for a finite group $G$ and a compact Lie group $K$ in terms of $I$-adic completions. This will be discussed in more detail in Section \ref{sec:StableMaps}.

To understand the strong Segal conjecture and its generalizations, one must first recall Segal's identification,
 \[ A(G) \cong [\SphereSpectrum_G,\SphereSpectrum_G]^G, \]
where $\SphereSpectrum_G$ denotes the $G$-equivariant sphere spectrum. (See \cite{tomDieck:TransformationGroups} for a proof and generalization to compact Lie groups.) For a $G$-spectrum $E$, the homotopy groups of the function spectrum $F(\SphereSpectrum_G,E)$ then receive an $A(G)$-module structure by pre-composition, and can hence be subjected to $I$-adic completion. We say that the \emph{completion theorem holds for $E$} if the natural map $E^*_G(\SphereSpectrum_G) \to E^*_G(EG_+)$, induced by the map $EG \to pt$,  is an $I$-adic completion map. Carlsson's theorem in \cite{Carlsson:SegalConjecture} was the completion theorem for $E = \SphereSpectrum_{G}$.

The importance of completion theorems is most pronounced when $E$ is a $G$-split $G$-spectrum. In this case Theorem \ref{thm:Split} gives an isomorphism $E^*_G(EG_+) \cong E^*(EG_+/G) \cong E^*(BG_+)$, where $E^*$ is the cohomology theory associated to the underlying non-equivariant spectrum of $E$, and so the completion theorem for a $G$-split spectrum $E$ can be interpreted as an isomorphism
 \[ E^*(BG_+) \cong E^*_G(\SphereSpectrum_G)^\wedge_I \, . \]
The advantage here is that one is often interested in $E^*(BG_+)$, while $E^*_G(\SphereSpectrum_G)$ is known, or easy to calculate. The tricky thing is that $I(G)$-adic completions have so far been difficult to compute in practice, but in this paper we present methods to simplify this process.

\subsection{Spectrum-level completion}
Completion theorems were given a more topological flavour by Greenlees--May in
\cite{GreenleesMay:CompBurnside}, where they defined a spectrum-level completion at ideals of the Burnside ring. This allowed them to rephrase the completion conjecture, asking, for a $G$-spectrum $E$, whether the natural map
\[ F(\SphereSpectrum_G,E) \longrightarrow F(EG_+,E), \]
induced by the map $EG \to pt$, becomes an equivalence upon $I$-adic completion. We recall some basic properties of the spectrum-level completion here and refer the reader to \cite{GreenleesMay:CompBurnside} for details.

First observe that one obtains a homotopy action of $A(G)$ on a $G$-spectrum $E$ by letting $\alpha \in A(G) \cong [\SphereSpectrum_G,\SphereSpectrum_G]^G$ act via the composite
\[ E \simeq \SphereSpectrum_G \wedge E \xrightarrow{\alpha\wedge 1} \SphereSpectrum_G \wedge E \simeq E.\]

A $G$-spectrum $T$ is \emph{$\alpha$-torsion} if the homotopy class $\alpha^n \colon T \to T$ is trivial for sufficiently large $n$. For an ideal $J \subseteq A(G)$, we say that $T$ is \emph{$J$-torsion} if $T$ is $\alpha$-torsion for all $\alpha \in J$. Define $\calT_J$ to be the class of all $J$-torsion $G$-spectra. Greenlees--May then define the $J$-adic completion $E^\wedge_J$ of a $G$-spectrum $E$ to be the Bousfield $\calT_J$-localization of $E$. 

To elaborate slightly on this, say that a spectrum $W$ is \emph{$\calT_J$-acyclic} if $W\wedge T$ is contractible for every $T \in \calT_J$. We say a spectrum $X$ is \emph{$\calT_J$-local} if $[W,X] = 0$ for every $\calT_J$-acyclic spectrum $W$, and we say that a map $f \colon E \to X$ is a \emph{$\calT_J$-equivalence} if its cofibre is $\calT_J$-acyclic. A \emph{$\calT_J$-localization} of $E$ is then a $\calT_J$-equivalence from $E$ to a $\calT_J$-local spectrum. Such a map is, up to homotopy, initial among maps from $E$ to $\calT_J$-local spectra, and it follows that $\calT_J$-localization, and hence $J$-adic completion, is unique up to homotopy. 

Greenlees--May establish the existence of $J$-completions via an explicit construction $E^\wedge_J \simeq F(M(J),E)$ for a certain spectrum $M(J)$, but also point out that existence can be established formally by adapting Bousfield's arguments from \cite{Bousfield:LocalizationOfSpectraWRTHomology} to the equivariant setting.

Greenlees--May describe the homotopy groups of a completed spectrum in terms of the zeroth and first derived functors of the $J$-adic completion functor (the higher derived functors vanish) in \cite{GreenleesMay:CompBurnside}, and these derived functors are studied in \cite{GreenleesMay:DerivedFunctors}. For finite $G$ the Burnside ring $A(G)$ is Noetherian, so when $E$ is of finite type, the first derived functor also vanishes, and there is a natural isomorphism of Mackey functors
 \[ \pi_q(E^\wedge_J) \cong  \pi_q(E)^\wedge_J \, . \]

\subsection{Non-equivariant effect of completion}

Generalizing the augmentation, we consider for a subgroup $H \leq G$ the \emph{$H$-fixed-point homomorphism}
\[ \phi^H \colon A(G) \to \Z \]
defined by setting
\[ \phi^H([X]) = |X^H| \]
for a $G$-set $X$ and extending linearly. For a subgroup $H \leq G$ and an ideal $J \subseteq A(G)$, set
\[ \phi^H(J) \defeq \left\langle  \phi^H(X)  ~ \vert X \in J  \right\rangle  \subseteq \Z \, . \] 

Greenlees proved the following useful theorem in \cite{Greenlees:EqFuncDuals}.
\begin{theorem}\cite{Greenlees:EqFuncDuals} \label{thm:Greenlees}
If $X$ is a bounded-below $G$-spectrum, then the map obtained from $X \to X^{\wedge}_{J}$ by taking $\Phi^H$-fixed points is non-equivariantly $\phi^H(J)$-adic completion.
\end{theorem}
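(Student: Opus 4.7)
The plan is to model the $J$-adic completion explicitly as a homotopy limit of Koszul-type cofibres and then exploit the symmetric monoidal nature of the geometric fixed-point functor $\Phi^{H}$, together with the basic identification of $\Phi^{H}(\alpha)$ with multiplication by $\phi^{H}(\alpha)$ on the non-equivariant sphere $\Phi^{H}(\SphereSpectrum_{G}) \simeq \SphereSpectrum$ (a consequence of tom Dieck's computation of $[\SphereSpectrum_{G},\SphereSpectrum_{G}]^{G}$).

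Since $A(G)$ is Noetherian, I would first reduce to $J=(\alpha_{1},\dots,\alpha_{r})$ finitely generated, and by smashing together the individual Koszul factors reduce further to the principal case $J=(\alpha)$. Representing $\alpha$ by a $G$-equivariant self-map of $\SphereSpectrum_{G}$ and writing $C(\alpha^{n})$ for the cofibre of its $n$-th power, a Greenlees--May telescope argument identifies the completion of a bounded-below $G$-spectrum $X$ with
\[
X^{\wedge}_{(\alpha)} \;\simeq\; \holim_{n} \bigl( X \wedge C(\alpha^{n}) \bigr).
\]
Because $\Phi^{H}$ preserves smash products and cofibre sequences, termwise application yields
\[
\Phi^{H}\bigl(X \wedge C(\alpha^{n})\bigr) \;\simeq\; \Phi^{H}(X) \wedge C\bigl(\phi^{H}(\alpha)^{n}\bigr),
\]
which is precisely the $n$-th Koszul term computing the $\phi^{H}(\alpha)$-adic completion of $\Phi^{H}(X)$ non-equivariantly. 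Iterating over generators then handles the general ideal on both sides.

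The main obstacle is passing $\Phi^{H}$ through the homotopy inverse limit. Unlike actual fixed points, $\Phi^{H} = (\widetilde{E}\calP \wedge -)^{H}$ involves smashing with the non-dualizable spectrum $\widetilde{E}\calP$, and smashing does not preserve arbitrary homotopy limits. This is where the bounded-below hypothesis is essential. I would argue as follows: the terms $X \wedge C(\alpha^{n})$ are uniformly bounded below (the degree-zero element $\alpha$ does not shift connectivity), so the same holds for $\Phi^{H}(X) \wedge C(\phi^{H}(\alpha)^{n})$. The tower of abelian groups $\{\pi_{d}(\Phi^{H}(X)) / \phi^{H}(\alpha)^{n}\}$ has surjective structure maps and is automatically Mittag-Leffler, so $\lim^{1}$ vanishes and the Milnor sequence computes the homotopy groups of both sides via the ordinary inverse limit. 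Combined with a uniform-connectivity argument showing that the natural comparison $T \wedge \holim_{n} X_{n} \to \holim_{n}(T \wedge X_{n})$ is an equivalence for towers of uniformly bounded-below spectra, this identifies $\Phi^{H}(X^{\wedge}_{J})$ with $\Phi^{H}(X)^{\wedge}_{\phi^{H}(J)}$. Finally, one checks that the resulting equivalence is induced by the natural completion map $\Phi^{H}(X) \to \Phi^{H}(X)^{\wedge}_{\phi^{H}(J)}$, which follows from naturality of the Koszul construction and the universal property of completion.
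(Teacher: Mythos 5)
First, note that this statement is not proved in the paper at all: it is imported from Greenlees \cite{Greenlees:EqFuncDuals}, so there is no internal argument to compare against, and nothing elsewhere in the paper supplies the missing step below. Your setup is fine as far as it goes: the reduction to finitely generated $J$, the telescope/Koszul model $X^{\wedge}_{(\alpha)} \simeq \operatorname{holim}_n \bigl(X \wedge C(\alpha^n)\bigr)$, the monoidality of $\Phi^H$, and the identification of $\Phi^H(\alpha)$ with multiplication by $\phi^H(\alpha)$ are all standard and correct, and you correctly isolate the crux: commuting $\Phi^H = (\widetilde{E}\calP \wedge -)^H$ past the homotopy limit. But the lemma you invoke to do this --- that $T \wedge \operatorname{holim}_n X_n \to \operatorname{holim}_n (T \wedge X_n)$ is an equivalence whenever the tower is uniformly bounded below --- is false. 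Take $X_n = \SphereSpectrum/p^n$ with the usual maps, so $\operatorname{holim}_n X_n \simeq \SphereSpectrum^{\wedge}_p$, and let $T = S\mathbb{Q}$ be the rational Moore spectrum. Each $T \wedge X_n \simeq S\mathbb{Q}/p^n$ is contractible, so the right-hand side is trivial, while $\pi_0\bigl(S\mathbb{Q} \wedge \SphereSpectrum^{\wedge}_p\bigr) \cong \mathbb{Q} \otimes \mathbb{Z}_p \neq 0$. Here every spectrum in sight is connective (the $X_n$ are even finite), so no uniform-connectivity hypothesis on the tower can rescue the claim; what fails is that $T$ is not finite, and $\widetilde{E}\calP$ is likewise an infinite $G$-CW complex, so you cannot argue by dualizability either.

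This is not a peripheral issue you can patch with a routine connectivity estimate: the failure of $\widetilde{E}\calP \wedge -$ (equivalently of $\Phi^H$) to commute with the limits defining $J$-adic completion is exactly where the content of the theorem lies, and it is why the hypothesis is that $X$ itself be bounded below and why this is a theorem of Greenlees rather than a formal consequence of monoidality. Your Mittag--Leffler remark only computes the homotopy of $\operatorname{holim}_n \Phi^H\bigl(X \wedge C(\alpha^n)\bigr)$ (and even there the exact sequences contribute torsion terms from $\pi_{d-1}$, so what one gets is the derived completion, which is the right target anyway); it says nothing about $\Phi^H\bigl(X^{\wedge}_{(\alpha)}\bigr)$, which is the side you must control. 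As written, your argument in effect assumes the hard half of the statement. To close the gap you need input specific to this situation --- the interplay of the bounded-below hypothesis on $X$ with the isotropy-separation structure of $\widetilde{E}\calP$ --- which is precisely what \cite{Greenlees:EqFuncDuals} supplies; absent that, the honest options are to reproduce Greenlees's argument or to cite it, as the paper does.
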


\section{The replacement theorem} \label{sec:Replacement}

In this section we prove our first structural theorem for computing $I(G)$-adic completions, which is Theorem \ref{mthm:Replacement} of the introduction. The theorem says that when $X$ is a bounded-below $G$-spectrum, one can replace $X$ by a simpler spectrum $X[\F_\mathbf{P}]$ without changing the weak homotopy type of the $I(G)$-adic completion. The spectrum $X[\F_\mathbf{P}]$ admits a natural map $X[\F_\mathbf{P}] \xrightarrow{\iota} X$, and is characterized by the fact that $\Phi^H(\iota)$ is a non-equivariant weak equivalence if $H \in \F_\mathbf{P}$, and $\Phi^H(X[\F_\mathbf{P}])$ is weakly contractible if $H \notin \F_\mathbf{P}$. (Recall from Section \ref{sec:FamClSp} that $\F_\mathbf{P}$ is the family of subgroups in $G$ of prime-power order for any prime.) The theorem therefore says in principle that $I(G)$-adic completions are determined by prime-power fixed-point spectra.

Geometric intuition would have one think of $X[\F_\mathbf{P}]$ as the subspectrum of $X$ consisting of orbits with isotropy groups in $\F_\mathbf{P}$. This is a useful analogy but the construction does not make sense in equivariant stable homotopy theory; instead $X[\F_\mathbf{P}]$ is defined as follows.

\begin{definition} \label{def:X[F]}
Let $\F$ be a family of subgroups in $G$. For a $G$-spectrum $X$, the \emph{$\F$-isotropy restriction} of $X$ is the $G$-spectrum
 \[ X[\F]  \defeq E\F_+ \sma X \, . \] 
The functor $R_\F \colon X \mapsto X[\F]$ is the \emph{$\F$-isotropy restriction functor}. Denote by $\eta \colon R_\F \Rightarrow Id$ the natural map induced by the projection $E\F \to pt$.
\end{definition}

The natural transformation map $\eta$ is induced by a trivial map, and to understand its proper role we make the following remark. Recall from Section \ref{sec:FamClSp} that $\F_{all}$, the family consisting of all subgroups of $G$ has a universal space homotopy equivalent to a point. For a $G$-spectrum $X$ we therefore have a homotopy equivalence   $X[\F_{all}] \xrightarrow{\simeq}   X .$ The inclusion $\F \subseteq \F_{all}$ induces an inclusion $E\F \hookrightarrow E\F_{all}$ and hence a map $X[\F] \to X[\F_{all}]$, which we compose with the equivalence $X[\F_{all}] \xrightarrow{\simeq}   X $ to get $\eta_X$. Thus the map $E\F \to pt$ in the definition above should be thought of as the composite $E\F \to E\F_{all} \xrightarrow{\simeq} pt$, but in the end projection to a point is trivial however one factors it.

The next lemma records some elementary properties of the $\F$-isotropy restriction. 

\begin{lemma}
Let $\F$ be a family of subgroups in $G$.
\begin{enumerate}
 \item[(a)] $R_\F$ preserves the properties of being bounded below, of finite type or a $G$-CW spectrum.
 \item[(b)] $R_\F$ commutes with colimits.
 \item[(c)] Let $X$ be a $G$-spectrum, and let $H$ be a subgroup of $G$. If $H\notin \F$ then $\Phi^H(X[\F])$ is non-equivariantly weakly contractible. If  $H\in \F$ then $\Phi^H(X[\F]) \xrightarrow{\Phi^H(\eta)} \Phi^H(X)$ is a non-equivariant equivalence.
 \item[(d)] An inclusion of families $\F_a \subseteq \F_b$ induces a natural transformation $R_{\F_a} \Rightarrow R_{\F_b}$ in which every map is a cofibration.
\end{enumerate}
\end{lemma}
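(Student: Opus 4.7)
My plan is to dispatch (a), (b), and (d) by unwinding the definition $R_\F(X) = E\F_+ \wedge X$ and invoking Lemma \ref{lem:EFnice}, and to prove (c) by exploiting the fact that $\Phi^H$ is symmetric monoidal and commutes with suspension spectra.

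For (a), Lemma \ref{lem:EFnice} gives $E\F$ the structure of a $G$-CW complex of finite type, so $E\F_+$ is a based $G$-CW complex whose cells lie in non-negative dimensions. Smashing a bounded-below spectrum with such a space preserves the bound (the connectivity is controlled by the cellular structure), finite type is preserved since both factors are of finite type, and smashing a $G$-CW spectrum with a based $G$-CW complex manifestly produces a $G$-CW spectrum. For (b), the functor $-\wedge E\F_+$ is left adjoint to the function spectrum functor $F(E\F_+,-)$, so it preserves all colimits. For (d), Lemma \ref{lem:EFnice} provides an inclusion of $G$-CW complexes $E\F_a \hookrightarrow E\F_b$, hence a based cofibration $E\F_{a,+} \to E\F_{b,+}$; smashing with $X$ preserves cofibrations, giving the natural transformation $R_{\F_a} \Rightarrow R_{\F_b}$ with each component a cofibration.

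The substantive statement is (c). Since the geometric fixed-point functor is symmetric monoidal and satisfies $\Phi^H(\Stable Y) \simeq \Stable(Y^H)$ for a based $G$-space $Y$ (as recalled in Section \ref{sec:ESHT}), I would write
\[
 \Phi^H(X[\F]) \;=\; \Phi^H(E\F_+ \sma X) \;\simeq\; \Phi^H(\Stable E\F_+) \sma \Phi^H(X) \;\simeq\; \Stable(E\F_+^{\,H}) \sma \Phi^H(X).
\]
Since taking $H$-fixed points commutes with adding a disjoint basepoint, $(E\F_+)^H = (E\F^H)_+$. Now the universal property of $E\F$ recalled in Section \ref{sec:FamClSp} says that $E\F^H$ is empty if $H \notin \F$ and contractible if $H \in \F$. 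In the first case $(E\F^H)_+ = S^0_{\mathrm{triv}}$ (a single basepoint), so its suspension spectrum is trivial, and $\Phi^H(X[\F])$ is weakly contractible. In the second case $(E\F^H)_+ \simeq S^0$, so $\Stable(E\F^H_+) \simeq \SphereSpectrum$ and smashing produces an equivalence to $\Phi^H(X)$. It remains to identify this equivalence with $\Phi^H(\eta)$; this is immediate because $\eta$ itself is obtained by smashing $X$ with the projection $E\F_+ \to S^0$, and after applying $\Phi^H$ (with $H \in \F$) this projection becomes an equivalence of suspension spectra of contractible based spaces.

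The only step that requires any care is the compatibility of $\Phi^H$ with the smash product and with suspension spectra in (c); everything else is a direct appeal to Lemma \ref{lem:EFnice} and the adjunction between $\sma E\F_+$ and $F(E\F_+,-)$. I do not anticipate any real obstacle.
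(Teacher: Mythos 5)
Your argument is correct and follows essentially the same route as the paper: (a), (b), (d) by unwinding $R_\F(X)=E\F_+\sma X$ together with Lemma \ref{lem:EFnice}, and (c) via the monoidality of $\Phi^H$ and the fixed-point characterization of $E\F$, with your identification of the resulting equivalence with $\Phi^H(\eta)$ being a slightly more explicit version of the paper's "Part (c) follows." (Only a cosmetic quibble: when $H\notin\F$ the based space $(E\F^H)_+$ is a single point rather than anything deserving the name $S^0$, and when $H\in\F$ the map $(E\F^H)_+\to S^0$ is an equivalence of based spaces with contractible underlying unbased spaces, not of "contractible based spaces.")
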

\begin{proof}
Part (a) is immediate since $E\F$ is a $G$-CW complex of finite type, and (b) is a direct consequence of the smash product definition of $R_\F$.

Since $\Phi^H$ commutes with suspension and smash product, one has a non-equivariant equivalence, 
\[ \Phi^H(X[\F]) \simeq (E\F_+)^H \sma \Phi^H(X) \, . \]
Now, $(E\F_+)^H$ is contractible for $H \notin \F$, and equivalent to $S^0$  for $H \in \F$. Part (c) follows.

The natural transformation in part (d) is induced by the cofibration $E\F_a \to E\F_b$ in the obvious way.
\end{proof}

A less obvious property of isotropy restriction is that it commutes with completion up to weak equivalence. To make sense of this, we first need a map comparing the two ways of composing the isotropy restriction and completion functors. Let $X$ be a $G$-spectrum. For a family of subgroups $\F$ and an ideal $J \subseteq A(G)$, a commutative diagram 
\[ 
\xymatrix{ & X^\wedge_J[\F] \ar[dd] \\
           X[\F] \ar[ur] \ar[dr]& \\ 
           & X[\F]^\wedge_J}
\]
is obtained as the diagram
\[ 
\xymatrix{ & F(\SphereSpectrum_G,X)\sma F(\SphereSpectrum_G,E\F_+) \ar[r] & F(M(I),X) \sma F(\SphereSpectrum_G,E\F_+) \ar[dd]^h \\
  X \sma E\F_+  \ar[ur]^{\cong} \ar[dr]^{\cong} &&\\
  & F(\SphereSpectrum_G \sma \SphereSpectrum_G, X \sma E\F_+) \ar[r] & F(M(I)\sma \SphereSpectrum_G, X\sma E\F_+) \, ,
}
\]
where the horizontal maps are each induced by the natural map $M(I) \to \SphereSpectrum_G$, and $h$ is the canonical map.

\begin{lemma} \label{lem:IsoResCompCommute}
Let $\F$ be a family of subgroups of $G$ and let $J$ be an ideal of $A(G)$. If $X$ is a bounded-below $G$-spectrum, then the natural map $h \colon X^\wedge_J[\F] \to X[\F]^\wedge_J$ is a weak equivalence.
\end{lemma}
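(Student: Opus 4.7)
The plan is to apply Proposition \ref{prop:WeakGeometric}, reducing the problem to showing that $\Phi^H(h)$ is a non-equivariant weak equivalence for every subgroup $H \leq G$. The two key ingredients will be that $\Phi^H$ commutes with smash products up to natural weak equivalence (so in particular $\Phi^H(E\F_+) \simeq (E\F^H)_+$), together with Theorem \ref{thm:Greenlees}, which identifies $\Phi^H$ of a $J$-adic completion with the $\phi^H(J)$-adic completion of $\Phi^H$ for bounded-below spectra.

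First I would verify that the hypothesis of Theorem \ref{thm:Greenlees} is satisfied on both sides of $h$: $X[\F]$ is bounded below because $E\F_+$ is a connective $G$-CW complex of finite type and $X$ is bounded below. Applying $\Phi^H$ to the two spectra in question and combining the ingredients above, I obtain natural non-equivariant equivalences
\begin{align*}
 \Phi^H(X^\wedge_J[\F]) &\simeq (E\F^H)_+ \sma \Phi^H(X)^\wedge_{\phi^H(J)} \, , \\
 \Phi^H(X[\F]^\wedge_J) &\simeq \bigl( (E\F^H)_+ \sma \Phi^H(X) \bigr)^\wedge_{\phi^H(J)} \, .
\end{align*}

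Then I would split into cases. If $H \notin \F$, then $E\F^H = \emptyset$ so $(E\F^H)_+ \simeq \ast$, and both spectra above are contractible (recalling that the completion of a contractible spectrum is contractible); hence $\Phi^H(h)$ is trivially a weak equivalence. If $H \in \F$, then $(E\F^H)_+ \simeq S^0$, so both spectra reduce to $\Phi^H(X)^\wedge_{\phi^H(J)}$. Commutativity of the triangle defining $h$ exhibits $\Phi^H(h)$ as the middle edge of a triangle under $\Phi^H(X[\F]) \simeq \Phi^H(X)$ in which the other two legs are both the natural $\phi^H(J)$-adic completion map; uniqueness of completion (in the sense of Bousfield localization) then forces $\Phi^H(h)$ to be a weak equivalence.

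I expect the main obstacle to lie in the case $H \in \F$, where one must verify that the equivalences produced above genuinely intertwine $\Phi^H(h)$ with the canonical equivalence between the two copies of $\Phi^H(X)^\wedge_{\phi^H(J)}$. This amounts to chasing the explicit construction of $h$ through the $\Phi^H$ functor, invoking the naturality built into Theorem \ref{thm:Greenlees} and the compatibility of $\Phi^H$ with smash products and function spectra. Once this map-level compatibility is secured, the equivalences on $\Phi^H$ for each $H$ combine via Proposition \ref{prop:WeakGeometric} to give the desired conclusion.
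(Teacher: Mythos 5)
Your proposal is correct and follows essentially the same route as the paper: reduce via Proposition \ref{prop:WeakGeometric} to geometric fixed points, split into the cases $H \notin \F$ (both sides weakly contractible) and $H \in \F$, and in the latter case use Theorem \ref{thm:Greenlees} to identify both legs of the triangle under $\Phi^H(X[\F])$ as $\phi^H(J)$-adic completion maps, so that $\Phi^H(h)$ is forced to be an equivalence. The map-level compatibility you flag as the main obstacle is exactly what the paper's proof also relies on (``since $\Phi^H(h)$ respects these maps''), so there is no substantive difference.
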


\begin{proof}
By Proposition \ref{prop:WeakGeometric} it suffices to show that $\Phi^H(f)$ is a non-equivariant weak equivalence for every subgroup $H$ of $G$. 

When $H \notin \F$, the spectrum $\Phi^H(X^\wedge_J[\F])$ is weakly contractible. By Theorem \ref{thm:Greenlees}, $\Phi^H( X[\F]^\wedge_J )$ is non-equivariantly equivalent to the $\phi^H(J)$-adic completion of the weakly contractible spectrum $\Phi^H( X[\F])$ and is therefore itself weakly contractible. It follows trivially that $\Phi^H(h)$ is a weak equivalence.

For $H \in \F$, Theorem \ref{thm:Greenlees} gives non-equivariant equivalences 
\[ \Phi^H(X^\wedge_J[\F]) \simeq \Phi^H(X^\wedge_J) \simeq \Phi^H(X)^\wedge_{\phi^H(J)} \,   \]
and
\[ \Phi^H(X[\F]^\wedge_J) \simeq \Phi^H(X[\F])^\wedge_{\phi^H(J)} \simeq \Phi^H(X)^\wedge_{\phi^H(J)} \, . \]
In fact the theorem implies that the natural maps $\Phi^H(X[\F]) \to  \Phi^H(X^\wedge_J[\F])$ and $\Phi^H(X[\F]) \to  \Phi^H(X[\F]^\wedge_J)$ are non-equivariantly $\phi^H(J)$-adic completion maps. Since $\Phi^H(h)$ respects these maps, it follows that it is a non-equivariant equivalence. 
\end{proof}

Having defined the replacement spectra, we now prepare to prove the Replacement Theorem. Proposition \ref{prop:WeakGeometric} and Theorem \ref{thm:Greenlees} allow us to control the $I(G)$-adic completion of a spectrum if we can describe the ideals  $\phi^H(I(G))$. This is done in the following lemma.

\begin{lemma} \label{lem:nH}
For a subgroup $H \leq G$ we have
 \[ \phi^H(I(G)) = \begin{cases}  (0), ~ &\text{if}~ H = \{1\}; \\
                         (p^k)~\text{for some}~ k>0,   &\text{if}~ H~ \text{is a $p$-group; and}  \\
 												 \Z,~  &\text{otherwise.} 
                                   
\end{cases} \]
\end{lemma}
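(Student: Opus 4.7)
The plan is to evaluate $\phi^H$ directly on the standard $\mathbb{Z}$-linear generators of $I(G)$. Since $A(G)$ has basis $\{[G/K]\}$ and $\epsilon([G/K]) = [G:K]$, the augmentation ideal is spanned by the elements $\gamma_K \defeq [G/K] - [G:K]\cdot[G/G]$ as $K$ ranges over conjugacy classes of subgroups of $G$. On such a generator, $\phi^H(\gamma_K) = n(H,K) - [G:K]$, where $n(H,K) \defeq |(G/K)^H|$ counts cosets $gK$ with $g^{-1}Hg \leq K$. The central observation is that $n(H,K) = 0$ whenever $K$ contains no conjugate of $H$ --- in particular whenever $K$ is a $p$-subgroup and $H$ is not. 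The case $H = \{1\}$ is then immediate, since $\phi^{\{1\}}$ coincides with $\epsilon$, giving $\phi^{\{1\}}(I(G)) = \epsilon(\ker\epsilon) = 0$.

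For a nontrivial $p$-group $H$, I would prove the inclusion $\phi^H(I(G)) \subseteq p\mathbb{Z}$ by applying the classical orbit-counting congruence $|X^H| \equiv |X| \pmod p$, valid because every non-fixed $H$-orbit has cardinality a positive power of $p$. Extending linearly, $\phi^H \equiv \epsilon \pmod p$ on all of $A(G)$, so $\alpha \in I(G)$ implies $p \mid \phi^H(\alpha)$. For nontriviality, $H$ acts freely on $G/\{1\}$, so $n(H,\{1\}) = 0$ and $\phi^H(\gamma_{\{1\}}) = -|G| \neq 0$. Together these pin down $\phi^H(I(G)) = (p^k)$ for some $1 \leq k \leq v_p(|G|)$.

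For $H$ with two or more distinct prime divisors, the goal $\phi^H(I(G)) = \mathbb{Z}$ reduces to showing $\phi^H(I(G)) \not\subseteq (p)$ for every prime $p$. The key move is to exploit a Sylow $p$-subgroup $P$ of the ambient group $G$ (rather than of $H$): since $P$ is a $p$-group and $H$ is not, no conjugate of $H$ fits into $P$, so $n(H,P) = 0$ and $\phi^H(\gamma_P) = -[G:P]$, which is coprime to $p$ by the defining property of a Sylow subgroup. Varying $p$ yields $\phi^H(I(G)) = \mathbb{Z}$. The main subtlety lies in this final case: one needs a single uniform witness for every prime, including those that do not divide $|H|$, and the trick of using $G$-level Sylow subgroups rather than trying to work inside $H$ handles both regimes in one stroke.
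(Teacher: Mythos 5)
Your treatment of the trivial case and of the case where $|H|$ has two distinct prime divisors is correct and is essentially the paper's argument (the paper phrases the latter as: the values $-|G:G_q|$ for Sylow $q$-subgroups $G_q$ have greatest common divisor $1$, whereas you argue prime-by-prime that the ideal lies in no $(p)$; these are the same computation). The gap is in the nontrivial $p$-group case. There you establish two facts: $\phi^H(I(G))\subseteq (p)$ via the congruence $|X^H|\equiv |X| \pmod p$, and $-|G|=\phi^H\bigl([G/1]-|G|\,[G/G]\bigr)\in \phi^H(I(G))$, so the ideal is nonzero. But "nonzero and contained in $(p)$" does not pin down a generator of the form $p^k$: from your two facts alone one can only conclude $\phi^H(I(G))=(d)$ with $p\mid d$ and $d\mid |G|$, and nothing rules out, say, $d=pq$ for another prime $q$ dividing $|G|$. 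The inference to "$(p^k)$ for some $1\le k\le v_p(|G|)$" is therefore a non-sequitur as written; your witness $|G|$ is in general not a power of $p$, so it cannot do the job of excluding the other primes.

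The repair is exactly the device you already deploy in the mixed-order case, and it is what the paper does: for each prime $q\neq p$, take $K=G_q$ a Sylow $q$-subgroup of $G$. Since $H$ is a nontrivial $p$-group it is not a $q$-group, so $(G/G_q)^H=\emptyset$ and $-|G:G_q|\in\phi^H(I(G))$. The greatest common divisor of the integers $|G:G_q|$ over all primes $q\neq p$ is $|G_p|$, a power of $p$, so the ideal contains a power of $p$. An ideal of $\mathbb{Z}$ containing $p^m$ has generator dividing $p^m$, hence is $(p^k)$ for some $k\le m$, and your congruence argument ($\phi^H(I(G))\subseteq (p)$) forces $k\ge 1$. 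With that substitution the proof is complete and coincides with the paper's.
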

\begin{proof}
The first claim, $\phi^1(I(G)) = \epsilon(I(G)) = 0$, is true by definition of $I(G)$.

For nontrivial $H$ we observe that the ideal $I(G)$ is generated by elements of the form 
\[ [G/K] - |G:K| \cdot [G/G]\] 
where $K$ runs through subgroups of $G$, and thus $\phi^H(I(G))$ is generated by the numbers 
 \[(G/K)^H - |G:K| = \frac{|N_G(H,K)|}{|K|} - |G:K|\]
where the transporter $N_G(H,K)$ is the set of elements $x \in G$ such that $xHx^{-1} \leq K$.

For a prime $q$, let $G_q$ be a Sylow $q$-subgroup of $G$. Observe that if $H$ is not a $q$-group then $N_G(H,G_q)$ is empty so 
 \[(G/G_q)^H - |G:G_q| = - |G:G_q| \, . \]
Suppose that the order of $H$ is divisible by two distinct primes. Then $H$ is not a $q$-group for any prime $q$, so, by the previous observation, $|G:G_q| \in \phi^H(I(G))$ for every prime $q$ dividing $G$. As the greatest common divisor of the numbers $|G/G_q|$ is $1$, this implies that $\phi^H(I(G)) = \Z$.

Now suppose that $H$ is a nontrivial $p$-group. An argument similar to the previous paragraph shows that $|G_p|$, which is the greatest common divisor of the collection of numbers $|G:G_q|$ for primes $q \neq p$, is contained in $\phi^H(I(G))$. Therefore the ideal $\phi^H(I(G))$ contains a power of $p$. On the other hand, for every finite $G$-set $X$ the set $X \setminus X^H$ is a disjoint union of nontrivial $H$-orbits. As $H$ is a  $p$-group each of these orbits has cardinality divisible by $p$. Since the numbers $|X^H| -|X| = - |X \setminus X^H|$ generate $\phi^H(I(G))$, this implies that $\phi^H(I(G))$ is contained in the ideal $(p)$. Thus $\phi^H(I(G))$ is of the form $(p^k)$ for some $k>0$.
\end{proof}

We can now prove the replacement theorem. 
\begin{proof}[Proof of Theorem \ref{mthm:Replacement}]
For every subgroup $H$ of $G$, the natural map $f \colon X[\F_\mathbf{P}] \to X$ induces a non-equivariant commutative diagram
\[ 
\xymatrix{
\Phi^H(X[\F_\mathbf{P}]) \ar[rr]^{\Phi^H(f)} \ar[dd] && \Phi^H(X) \ar[dd] \\ 
\\ 
{\Phi^H(X[\F_\mathbf{P}]_{I}^{\wedge})} \ar[rr]^{\Phi^H(f^{\wedge}_{I(G)})} && {\Phi^H(X_{I}^{\wedge})},
}
\]
where, by Theorem \ref{thm:Greenlees}, the two vertical maps are $\phi^H(I(G))$-adic completion maps, and the bottom map is the $\phi^H(I(G))$-adic completion of the top map. When $H$ is a group of prime power order the top map is a weak equivalence and so the same is true of the bottom map. When $H$ has order divisible by two primes the two vertical maps are completion at $\phi^H(I(G)) = \Z$, so both spectra in the bottom row are weakly contractible and the map between them is a weak equivalence by default. Since $\Phi^H(f^{\wedge}_{I(G)})$ is a weak equivalence for every subgroup $H$ of $G$ it follows from Proposition \ref{prop:WeakGeometric} that $f^{\wedge}_{I(G)}$ is a weak equivalence.
\end{proof}

\section{The decomposition theorem} \label{sec:Decomposition}
In this section we explain the importance of Theorem \ref{mthm:Replacement} by showing that the $I(G)$-adic completion of the replacement spectrum $X[\F_\mathbf{P}]$ can be computed in an efficient manner. That is, we show that the $I(G)$-adic completion of $X[\F_\mathbf{P}]$ decomposes as a colimit of $I(G)$-adic completions, the components of which can be computed as the completion at a prime. The first step in this direction is the following observation.

\begin{lemma} \label{lem:X[F]Colimit}
For a $G$-spectrum $X$, the natural map
 \[ \colim_{\F \in \calD} X[\F] \longrightarrow X[\F_\mathbf{P}] \] 
is a homotopy equivalence.
\end{lemma}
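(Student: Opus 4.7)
The plan is to reduce the statement to the space-level homeomorphism established in Lemma~\ref{lem:EFColimit}. The key observation I would use is that the functor $Y \mapsto Y_+ \sma X$, from unbased $G$-spaces to $G$-spectra, preserves all colimits: adjoining a disjoint basepoint is left adjoint to the forgetful functor from based to unbased $G$-spaces, and smashing with $X$ on the right is left adjoint to the function-spectrum functor $F(X,-)$, so the composite is a left adjoint.

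First I would apply this to the diagram $\F \mapsto E\F$ over $\calD$ to obtain a natural isomorphism
\[ \colim_{\F \in \calD} X[\F] \;=\; \colim_{\F \in \calD} (E\F_+ \sma X) \;\cong\; \Bigl( \colim_{\F \in \calD} E\F \Bigr)_+ \sma X. \]
Then I would invoke Lemma~\ref{lem:EFColimit} to identify the inner colimit with $E\F_\mathbf{P}$, so that the right-hand side is exactly $X[\F_\mathbf{P}]$. Since the comparison map appearing in the statement is induced by the universal property of the colimit, it coincides under these identifications with the composite isomorphism, and is thus itself an isomorphism, in particular a homotopy equivalence.

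I do not expect a real obstacle: the substantive content lies in Lemma~\ref{lem:EFColimit}, and the present lemma is a formal consequence of the fact that $(-)_+ \sma X$ commutes with colimits. As a side remark, part~(d) of the earlier lemma on properties of $R_\F$ guarantees that the maps in the diagram are cofibrations, which ensures that the categorical colimit also models the homotopy colimit — a point that will be useful when combining this lemma with the completion results later in the paper, but is not needed for the statement itself.
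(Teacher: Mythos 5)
Your proposal is correct and follows essentially the same route as the paper, which likewise deduces the statement from Lemma~\ref{lem:EFColimit} together with the fact that smash products (here, the left adjoint $Y \mapsto Y_+ \sma X$) preserve colimits; you have merely spelled out the adjunction argument in more detail. No gaps.
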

\begin{proof}
This follows from Lemma \ref{lem:EFColimit} and the fact that smash products preserve colimits. 
\end{proof}

With a bit more work we can prove the following proposition, of which Theorem \ref{mthm:Decomposition} in the introduction is a special case.

\begin{proposition} \label{prop:X[F]CompColimit}
Let $J$ be an ideal of $A(G)$. For a bounded-below $G$-spectrum $X$, the natural map
\[ \hocolim_{\F \in \calD} \left( X[\F]^\wedge_{J} \right) \longrightarrow X[\F_\mathbf{P}]^\wedge_J \]
is a weak equivalence. 
\end{proposition}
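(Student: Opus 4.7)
The plan is to detect the weak equivalence on geometric fixed points via Proposition \ref{prop:WeakGeometric}, then reduce to an elementary case analysis using Theorem \ref{thm:Greenlees}.

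Fix a subgroup $H \leq G$ and apply $\Phi^H$ to the map in question. Since $\Phi^H$ preserves smash products, wedges and cofibrations, hence homotopy colimits, and since each $X[\F]$ inherits the bounded-below property from $X$, Theorem \ref{thm:Greenlees} lets us commute $\Phi^H$ past the hocolim on the source and past each Bousfield completion appearing in the diagram. The map under scrutiny thus becomes
\[
\hocolim_{\F \in \calD} \Phi^H(X[\F])^\wedge_{\phi^H(J)} \longrightarrow \Phi^H(X[\F_\mathbf{P}])^\wedge_{\phi^H(J)} .
\]
The preceding computation of $\Phi^H(X[\F])$, namely $\Phi^H(X[\F]) \simeq \Phi^H(X)$ when $H \in \F$ and $\Phi^H(X[\F]) \simeq *$ otherwise, now reduces the problem to the position of $H$ relative to $\F_\mathbf{P}$.

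If $|H|$ has at least two distinct prime divisors, then $H$ belongs to no family in $\calD \cup \{\F_\mathbf{P}\}$, so every $\Phi^H(X[\F])$ is weakly contractible and both sides of the displayed map are trivially contractible. If $H = 1$, then $H$ lies in every family; every arrow in the source diagram is an equivalence of $\Phi^1(X)^\wedge_{\phi^1(J)}$, and since $\F_1$ is an initial object of $\calD$ (so its nerve is contractible), the hocolim is equivalent to $\Phi^1(X)^\wedge_{\phi^1(J)}$, matching the target. If $H$ is a non-trivial $p$-group, then of the families in $\calD$ only $\F_p$ contains $H$; the source diagram becomes a star whose center and every arm except the $\F_p$-arm are weakly contractible, so the hocolim collapses to the single non-trivial leaf $\Phi^H(X)^\wedge_{\phi^H(J)}$, again matching the target.

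The main delicacy will be the opening move: carefully justifying that $\Phi^H$ commutes with both the homotopy colimit and the Bousfield completions inside it, and checking that the identifications assemble into the natural map. Once that interchange is in hand the three-case analysis is immediate, and Proposition \ref{prop:WeakGeometric} delivers the proposition.
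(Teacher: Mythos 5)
Your proof is correct, but it takes a genuinely different route from the paper's. The paper never applies $\Phi^H$ to the homotopy colimit: it places the map in a commutative square whose other three sides are (i) the point-set identification $\colim_{\F\in\calD}X[\F]\cong X[\F_\mathbf{P}]$ of Lemma \ref{lem:X[F]Colimit} (coming from Lemma \ref{lem:EFColimit}, that the universal spaces themselves form a colimit), applied to the already-completed spectrum $X^\wedge_J$, (ii) the comparison $\hocolim\simeq\colim$, valid because the maps $X^\wedge_J[\F_1]\to X^\wedge_J[\F_p]$ are cofibrations by Lemma \ref{lem:EFnice}, and (iii) two applications of Lemma \ref{lem:IsoResCompCommute}, which says completion commutes with isotropy restriction. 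You instead inline the geometric-fixed-point computation (essentially re-deriving the content of Lemma \ref{lem:IsoResCompCommute}) and replace the point-set colimit identification by the interchange $\Phi^H(\hocolim)\simeq\hocolim\Phi^H$ together with the three-case analysis on $H$; your case analysis and the use of Theorem \ref{thm:Greenlees} and Proposition \ref{prop:WeakGeometric} are all sound. Be aware, though, that Theorem \ref{thm:Greenlees} only gives the commutation with completion, not with the homotopy colimit; the interchange is the one ingredient not available verbatim in the paper. It is true and justifiable here: $\Phi^H=(\widetilde{E}\calP\sma-)^H$ preserves wedges (by compactness of the orbit spheres on homotopy groups) and cofibre sequences (since these agree with fibre sequences up to weak equivalence), and the homotopy colimit over the star-shaped category $\calD$ is the cofibre of a single map between wedges, so it is preserved; you should also verify, as you note, that the resulting equivalences are the canonical maps (naturality of $\eta$ plus two-out-of-three applied to the leaf inclusion into the hocolim). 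What each route buys: the paper's argument is shorter given its lemmas and sidesteps any claim about $\Phi^H$ versus homotopy colimits; yours is more self-contained at the fixed-point level, makes visible exactly which subgroups contribute in each family, and would adapt to situations where only the $\Phi^H$-behaviour of the diagram is known.
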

\begin{proof}
This map fits into a commutative diagram
\[
 \xymatrix{ 
 \hocolim\limits_{\F \in \calD} \left( X^\wedge_{J}[\F]\right) \ar[d]^\simeq_{(\ref{lem:IsoResCompCommute})} \ar[r]^{ \simeq} & 
 \colim\limits_{\F \in \calD} \left( X^\wedge_{J}[\F]\right)  
 \ar[r]^{\quad \simeq}_{\quad (\ref{lem:X[F]Colimit})} & X^\wedge_J[\F_\mathbf{P}] \ar[d]^\simeq_{(\ref{lem:IsoResCompCommute})} \\
 \hocolim\limits_{\F \in \calD} \left( X[\F]^\wedge_{J} \right)  \ar[rr] && X[\F_\mathbf{P}]^\wedge_J \, .
 }
\]
As indicated, the maps in the upper right corner are equivalences by Lemmas \ref{lem:X[F]Colimit} and \ref{lem:IsoResCompCommute}, respectively. Lemma \ref{lem:IsoResCompCommute} also implies for each $\F \in \calD$, the map  $X[\F]^\wedge_{J} \to X^\wedge_{J}[\F]$ is a weak equivalence, and hence the induced map of homotopy colimits on the left side is a weak equivalence. The map from the homotopy colimit to the colimit along the top is an equivalence since each map in $X[\F_1] \to X[\F_p]$ in the diagram over which the colimit is taken is a cofibration (Lemma \ref{lem:EFnice}). It follows that the bottom map is a weak equivalence.
\end{proof}

Again we observe that in Lemma \ref{lem:X[F]Colimit} and Proposition \ref{prop:X[F]CompColimit}, the diagram $\calD$ (defined in Section \ref{sec:FamClSp}) can be restricted to just include families $\F_p$ for primes $p$ that divide the order of $G$ and the results still hold true. This is obviously more convenient for computational purposes, since one then obtains a finite diagram. However, by including all primes in the diagram, the result is natural in $G$ when $X$ is.

For a bounded-below $G$-spectrum $X$, Theorem \ref{mthm:Replacement} and Proposition \ref{prop:X[F]CompColimit} combine to give a weak equivalence
 \[  \hocolim_{\F \in \calD} \left( X[\F]^\wedge_{I(G)} \right) \simeq X^\wedge_{I(G)} \, . \] 
We now show that the completions inside the homotopy colimit can be computed in terms of $p$-adic completions. That is, for a prime $p$, the $I(G)$-adic completion of the spectrum $X[\F_p]$ is very close to being a $p$-adic completion. The difference lies only in the cofibration $X[\F_1] \to X[\F_p]$, the spectrum $X[\F_1]$ being unchanged by $I(G)$-adic completion. To make this precise, we make the following definition.

\begin{definition}
Let $\F_a \subseteq \F_b$ be families of subgroups in $G$. For a $G$-spectrum $X$, denote by $X[\F_b,\F_a]$ the cofibre of the map $X[\F_a] \to X[\F_b]$.
\end{definition}

Theorem \ref{mthm:X[F]Comp} in the introduction is included in the following proposition as parts (a) and (c).

\begin{proposition} \label{prop:X[F]Comp}
Let $X$ be a bounded-below $G$-spectrum.
\begin{itemize}
  \item[(a)] The map $X[\F_1] \to X[\F_1]^{\wedge}_{I(G)}$ is a weak equivalence.
  \item[(b)] For a prime $p$, the map $X[\F_p,\F_1] \to X[\F_p,\F_1]^{\wedge}_{I(G)} $ is, up to weak equivalence, a $p$-adic completion map.
  \item[(c)] For a prime $p$, the completion $X[\F_p]^{\wedge}_{I(G)}$ fits, up to weak equivalence, into a cofibration sequence
\[ X[\F_1] \longrightarrow X[\F_p]^{\wedge}_{I(G)} \longrightarrow \pComp{X[\F_p,\F_1]} \, . \]
\end{itemize}
\end{proposition}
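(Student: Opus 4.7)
The strategy is to handle (a) and (b) by a uniform application of Theorem~\ref{thm:Greenlees} and Proposition~\ref{prop:WeakGeometric}, and then to derive (c) by applying $I(G)$-adic completion to the defining cofibration of $X[\F_p,\F_1]$. The input driving every computation is the identification $\Phi^H(X[\F]) \simeq (E\F)^H_+ \sma \Phi^H(X)$, which makes $\Phi^H(X[\F])$ non-equivariantly equivalent to $\Phi^H(X)$ when $H \in \F$ and weakly contractible when $H \notin \F$. Combined with Lemma~\ref{lem:nH}, this pins down the geometric fixed-point spectra of every completion in sight.

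For (a), the input above identifies $\Phi^H(X[\F_1])$ with $\Phi^1(X)$ when $H=1$ and with a contractible spectrum otherwise. By Theorem~\ref{thm:Greenlees} the map $\Phi^H(X[\F_1]) \to \Phi^H(X[\F_1]^\wedge_{I(G)})$ is $\phi^H(I(G))$-adic completion, which is the identity when $H=1$ (since $\phi^1(I(G))=0$ by Lemma~\ref{lem:nH}) and a map of contractible spectra otherwise; Proposition~\ref{prop:WeakGeometric} then gives (a).

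For (b), the cofibre sequence $X[\F_1] \to X[\F_p] \to X[\F_p,\F_1]$ shows that $\Phi^H(X[\F_p,\F_1])$ is contractible unless $H$ is a nontrivial $p$-subgroup, in which case it is equivalent to $\Phi^H(X)$. For such $H$, Lemma~\ref{lem:nH} gives $\phi^H(I(G))=(p^k)$, whose completion on any abelian group coincides with $p$-adic completion, so Theorem~\ref{thm:Greenlees} shows that $X[\F_p,\F_1]^\wedge_{I(G)}$ and $\pComp{X[\F_p,\F_1]}$ carry the same geometric fixed-point spectra at every subgroup. To promote this to a genuine comparison I plan to first check that $X[\F_p,\F_1]^\wedge_{I(G)}$ is itself $p$-complete: applying Theorem~\ref{thm:Greenlees} a second time (valid since $I(G)$-adic completion preserves bounded-belowness for finite $G$), further $p$-completion leaves each $\Phi^H$ unchanged, so the $p$-completion map is a weak equivalence by Proposition~\ref{prop:WeakGeometric}. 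The universal property of $p$-adic completion as a Bousfield localization then factors $X[\F_p,\F_1] \to X[\F_p,\F_1]^\wedge_{I(G)}$ uniquely up to homotopy as
\[ X[\F_p,\F_1] \longrightarrow \pComp{X[\F_p,\F_1]} \longrightarrow X[\F_p,\F_1]^\wedge_{I(G)}, \]
and on every $\Phi^H$ the second arrow is a map between two $p$-adic completions of $\Phi^H(X[\F_p,\F_1])$ compatible with the respective completion maps, hence an equivalence. A final appeal to Proposition~\ref{prop:WeakGeometric} completes (b).

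For (c), apply $I(G)$-adic completion (a Bousfield localization, and thus exact on cofibre sequences) to the cofibration $X[\F_1] \to X[\F_p] \to X[\F_p,\F_1]$ and identify the first and third terms via (a) and (b) respectively. The main obstacle is the comparison argument in (b): because the ideals $(p)$ and $I(G)$ of $A(G)$ are incomparable, there is no a priori natural transformation between $(-)^\wedge_{(p)}$ and $(-)^\wedge_{I(G)}$, and the two-step universal-property argument above seems to be the cleanest way to produce the required equivalence.
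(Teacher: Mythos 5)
Your treatment of (a) matches the paper's exactly, and your (c) is the paper's argument with a different (but acceptable) justification for why completion preserves cofibre sequences: you appeal to the formal exactness of Bousfield localization, whereas the paper uses the concrete model $E^\wedge_J \simeq F(M(J),E)$, which preserves fibration sequences and hence cofibration sequences up to weak equivalence. The genuinely different step is (b). The paper produces the comparison between $X[\F_p,\F_1]^\wedge_{I(G)}$ and $\pComp{X[\F_p,\F_1]}$ by completing further at the sum ideal $I(G)+(p)$, obtaining a commutative square whose right and bottom maps are checked to be weak equivalences on geometric fixed points; the key economy is that Theorem~\ref{thm:Greenlees} is only ever applied to the original bounded-below spectrum $X[\F_p,\F_1]$, with the three ideals $(p)$, $I(G)$ and $I(G)+(p)$ (using $\phi^H(I(G)+(p))=(p)$ for $H\in\F_p$ and $\Z$ otherwise). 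You instead show that $X[\F_p,\F_1]^\wedge_{I(G)}$ is already $p$-complete and use the universal property of the $\calT_{(p)}$-localization to factor the $I(G)$-completion map through $\pComp{X[\F_p,\F_1]}$, then check the comparison on $\Phi^H$. This works, but note that your idempotence check applies Theorem~\ref{thm:Greenlees} to the \emph{completed} spectrum, so it really does need your parenthetical claim that $I(G)$-adic completion preserves bounded-belowness; that claim is true for finite $G$ (only the zeroth and first derived functors of $J$-adic completion appear in $\pi_*(E^\wedge_J)$, so vanishing of low homotopy is preserved), but it deserves a sentence of justification, and the paper's square avoids needing it altogether. Your route buys a conceptually clean statement (``the $I(G)$-completion is $p$-local, hence receives the canonical map from the $p$-completion''), at the cost of this extra input; the paper's route is slightly more ad hoc but stays entirely within the hypotheses of Greenlees' theorem as stated.
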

\begin{proof}
We prove part (a) by showing that the induced maps on geometric fixed-point spectra are non-equivariant weak equivalences for all subgroups $H \leq G$ and then applying Proposition \ref{prop:WeakGeometric}. For a nontrivial subgroup $H \leq G$, the map $\Phi^H(X[\F_1]) \to \Phi^H(X[\F_1]^{\wedge}_{I(G)})$ is a map of weakly contractible spectra, and hence a weak equivalence. For the trivial subgroup, the map $\Phi^1(X) \to \Phi^1(X[\F_1]^{\wedge}_{I(G)})$ is non-equivariantly completion at the ideal $n_1(I(G)) = 0$, which has no effect, so the map is a weak equivalence. 

Turning to part (b), we first note that for a subgroup $H \leq G$ we have, up to weak equivalence, a cofibration sequence
 \[ \Phi^H(X[\F_1]) \longrightarrow   \Phi^H(X[\F_p]) \longrightarrow   \Phi^H(X[\F_p,\F_1]) \, . \]
We deduce that when $H \notin \F_p$ or $H = 1$, the geometric fixed-point spectrum $\Phi^H(X[\F_p,\F_1])$ is weakly contractible. Hence the map $\Phi^H(X[\F_p,\F_1]) \to \Phi^H(X[\F_p,\F_1]^{\wedge}_{I(G)}) $ can trivially be regarded as a $p$-adic completion map. For $H \in \F_p \setminus \F_1$, the map $\Phi^H(X[\F_p,\F_1]) \to \Phi^H(X[\F_p,\F_1]^{\wedge}_{I(G)}) $ is non-equivariantly completion at $(p)$ by Theorem \ref{thm:Greenlees} and Lemma \ref{lem:nH}. To complete the proof we consider the commutative square
\begin{equation} \label{eq:FpF1Square}
\xymatrix{
  X[\F_p,\F_1]  \ar[rr]^{(-)^{\wedge}_{I(G)}} \ar[d]^{(-)^{\wedge}_{p}} && X[\F_p,\F_1]^{\wedge}_{I(G)} \ar[d]^{(-)^{\wedge}_{I(G)+(p)}} \\
  X[\F_p,\F_1]^{\wedge}_{p} \ar[rr]^{(-)^{\wedge}_{I(G)+(p)}}       && X[\F_p,\F_1]^{\wedge}_{I(G)+(p)}
\, .} 
\end{equation}
The preceding argument shows that taking geometric fixed points at a subgroup $H \leq G$ of this  square one obtains the commutative square 
\[ 
\xymatrix{
  \Phi^H(X[\F_p,\F_1])  \ar[rr]^{(-)^{\wedge}_{p}} \ar[d]^{(-)^{\wedge}_{p}} && \Phi^H(X[\F_p,\F_1]^{\wedge}_{I(G)}) \ar[d]^{\simeq} \\
  \Phi^H(X[\F_p,\F_1]^{\wedge}_{p}) \ar[rr]^{\simeq}       && \Phi^H(X[\F_p,\F_1]^{\wedge}_{I(G)+(p)})
\, .} 
\]
From this we deduce that in the square \eqref{eq:FpF1Square}, the lower and right side maps are weak equivalences, and thus the upper map is equivalent to the left map, up to weak equivalence, which is what we wanted to prove.

Part (c) is an easy consequence of parts (a) and (b), using the fact that completions preserve cofibrations up to weak equivalences (they preserve fibration sequences by their construction as mapping spectra, and cofibration sequences are fibration sequences up to weak equivalence).  
\end{proof}

As promised, the combination of Theorems \ref{mthm:Replacement}, \ref{mthm:Decomposition} and \ref{mthm:X[F]Comp} provides a method for computing $I(G)$-adic completions of bounded-below $G$-spectra. The overall procedure may seem slightly complicated, so we permit ourselves a small departure from precision to say that in essence these results allow us to compute $X^\wedge_{I(G)}$ as a wedge sum $\bigvee_p \pComp{X[\F_p]} $. Enforcing precision again, things are a bit more complicated since the spectra $X[\F_p]$ are joined together at $X[\F_1]$, necessitating the homotopy colimit construction instead of a simple wedge sum, and $X[\F_1]$ is unaffected by completion, making the completions slightly harder to describe. Some instances where the wedge sum description can be made precise are given in the following corollary.

\begin{corollary} \label{cor:CompAsWedge}
Let $X$ be a $G$-spectrum that is bounded below.
\begin{enumerate}
\item[(a)] If $X[\F_1]$ splits as a wedge sum of its $p$-completions, at different primes $p$, then there is a weak equivalence
 \[ X^\wedge_{I(G)} \simeq \bigvee_p \pComp{X[\F_p]} \, .  \]
\item[(b)] If the cofibration $X[\F_1] \to X$ splits, then there is a weak equivalence
 \[ X^\wedge_{I(G)} \simeq X[\F_1] \vee \bigvee_p \pComp{X[\F_p,\F_1]} \, . \]
\end{enumerate}
\end{corollary}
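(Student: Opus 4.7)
My plan is to deduce both parts from the homotopy colimit presentation $X^\wedge_{I(G)} \simeq \hocolim_{\F \in \calD}\, X[\F]^\wedge_{I(G)}$ supplied by Theorems \ref{mthm:Replacement} and \ref{mthm:Decomposition}, specialized using Theorem \ref{mthm:X[F]Comp}. The apex $X[\F_1]^\wedge_{I(G)}$ equals $X[\F_1]$ by part (a) of that theorem, and each leg $X[\F_1] \to X[\F_p]^\wedge_{I(G)}$ is a cofibration with cofiber $\pComp{X[\F_p,\F_1]}$ by part (c). The homotopy colimit of this ``fan'' of cofibrations sharing a common source is, as an iterated pushout, described by a cofibration sequence
\[ X[\F_1] \longrightarrow X^\wedge_{I(G)} \longrightarrow \bigvee_p \pComp{X[\F_p,\F_1]}. \qquad (\star) \]

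For part (b), the hypothesis that $X[\F_1] \hookrightarrow X$ splits provides a retraction $r\colon X \to X[\F_1]$; applying $I(G)$-adic completion functorially yields $r^{\wedge}_{I(G)}\colon X^\wedge_{I(G)} \to X[\F_1]^\wedge_{I(G)} = X[\F_1]$, which is a retraction of the left-hand map of $(\star)$. The cofibration $(\star)$ then splits as a wedge, giving the claimed decomposition.

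For part (a), write $A_p \defeq \pComp{X[\F_1]}$, so the hypothesis reads $X[\F_1] \simeq \bigvee_p A_p$. The core of the argument is the sub-claim
\[ X[\F_p]^\wedge_{I(G)} \;\simeq\; \Bigl(\bigvee_{q \neq p} A_q\Bigr) \vee \pComp{X[\F_p]}. \qquad (\ddagger) \]
Granted $(\ddagger)$, the structure map $X[\F_1] = \bigvee_q A_q \to X[\F_p]^\wedge_{I(G)}$ becomes the identity on each $A_q$ with $q \neq p$ and the natural $p$-completion $A_p \to \pComp{X[\F_p]}$ on the remaining summand. Computing the iterated pushout directly, each $A_q$ is identified across the diagram and ultimately absorbed into $\pComp{X[\F_q]} \subseteq X[\F_q]^\wedge_{I(G)}$, leaving $\bigvee_p \pComp{X[\F_p]}$.

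To prove $(\ddagger)$, I would construct the comparison map from the further completions $X[\F_p]^\wedge_{I(G)} \to X[\F_p]^\wedge_{I(G) + (q)}$, one for each prime $q$. A direct computation of $\phi^H(I(G)+(q))$, of the sort carried out in Lemma \ref{lem:nH}, together with Theorem \ref{thm:Greenlees}, identifies $X[\F_p]^\wedge_{I(G)+(q)} \simeq A_q$ when $q \neq p$ (the combined completion kills $\Phi^H$ for $H \in \F_p \setminus \{1\}$ and leaves $\qComp{X}$ at $\Phi^1$) and $X[\F_p]^\wedge_{I(G)+(p)} \simeq \pComp{X[\F_p]}$ when $q = p$. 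Proposition \ref{prop:WeakGeometric} then reduces $(\ddagger)$ to a check on geometric fixed-point spectra, where the hypothesis $X[\F_1] \simeq \bigvee_p A_p$, which forces $X \simeq \bigvee_p \pComp{X}$ upon taking $\Phi^1$, supplies the needed match at the trivial subgroup. The main obstacle is establishing $(\ddagger)$, and in particular carefully verifying that the combined comparison map is an equivalence on all $\Phi^H$; once this is in place, the hocolim computation sketched above is essentially formal.
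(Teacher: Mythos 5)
Your part (b) is correct, and your overall strategy---the fan $\{X[\F_1]\to X[\F_p]^{\wedge}_{I(G)}\}$ furnished by Theorems \ref{mthm:Replacement} and \ref{mthm:Decomposition} together with Proposition \ref{prop:X[F]Comp}---is exactly the derivation the paper intends (the corollary is stated there without a written proof). For (b) the only thing to add is the naturality check that the left-hand map of $(\star)$ agrees, up to homotopy, with the $I(G)$-completion of $\eta\colon X[\F_1]\to X$ (this follows from the naturality of the equivalences in Lemma \ref{lem:IsoResCompCommute}, Lemma \ref{lem:X[F]Colimit} and Theorem \ref{mthm:Replacement}), so that $r^{\wedge}_{I(G)}$ really does retract it.

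Part (a), however, has a genuine gap precisely at the point you identify as the crux. To apply Proposition \ref{prop:WeakGeometric} to $(\ddagger)$ you need an actual map of $G$-spectra between $X[\F_p]^{\wedge}_{I(G)}$ and $\bigl(\bigvee_{q\neq p}A_q\bigr)\vee\pComp{X[\F_p]}$, and you never produce one. Your comparison maps all have source $X[\F_p]^{\wedge}_{I(G)}$ and targets $X[\F_p]^{\wedge}_{I(G)+(q)}$, so the only ``combined'' map they yield lands in the product $\prod_q X[\F_p]^{\wedge}_{I(G)+(q)}$, not the wedge; and there is no natural map in the other direction, because completion maps go along inclusions of ideals, so $\pComp{X[\F_p]}$ and $X[\F_p]^{\wedge}_{I(G)}$ are related only by a zig-zag through $X[\F_p]^{\wedge}_{I(G)+(p)}$. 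When only finitely many $A_q$ are nontrivial the product coincides with the wedge and your $\Phi^H$-check goes through, but the hypothesis of (a) allows $A_q\not\simeq *$ for infinitely many primes (this is not controlled by $|G|$), and then the map into the product already fails on $\Phi^1$: there it becomes the canonical map $\bigvee_q (i^*X)^{\wedge}_{q}\to\prod_q (i^*X)^{\wedge}_{q}$, which is not a weak equivalence when infinitely many factors are nontrivial. (Your identifications $X[\F_p]^{\wedge}_{I(G)+(q)}\simeq A_q$ for $q\neq p$ and $X[\F_p]^{\wedge}_{I(G)+(p)}\simeq\pComp{X[\F_p]}$ are fine; the problem is assembling them into the claimed wedge decomposition.)

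A second, related gap: even granted $(\ddagger)$ as an abstract equivalence, your homotopy colimit computation needs the equivalence to be compatible with the structure maps, i.e.\ that under $(\ddagger)$ the map $X[\F_1]=\bigvee_q A_q\to X[\F_p]^{\wedge}_{I(G)}$ is the identity on each $A_q$ with $q\neq p$ and the completion map $A_p\to\pComp{X[\F_p]}$ on the last summand. You assert this but do not prove it, and it is not automatic from the mere existence of an equivalence: maps between completions of bounded-below spectra at distinct primes need not vanish, so off-diagonal components cannot simply be discarded when each $A_q$ is ``identified across the diagram and absorbed.'' The repair is to establish the compatible form of $(\ddagger)$ directly, for instance by showing that the composite $\bigvee_{q\neq p}A_q\to X[\F_1]\to X[\F_p]^{\wedge}_{I(G)}$ includes a wedge summand with complement $\pComp{X[\F_p]}$, or equivalently by constructing a map of fan diagrams from the model diagram (the wedge over $q$ of fans whose only non-identity leg is $A_q\to {X[\F_q]}^{\wedge}_{q}$) to the actual diagram and checking it is an objectwise weak equivalence via your $\Phi^H$ computations and the hypothesis at $\Phi^1$; only after that is the passage to $\bigvee_p\pComp{X[\F_p]}$ formal.
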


Part (a) in Corollary \ref{cor:CompAsWedge} is particularly useful since it applies when $X[\F_1]$ is a torsion spectrum, such as the classifying space of a group. Part (b) is useful to deal with basepoint issues when $X$ is the suspension spectrum of a $G$-space with added, disjoint basepoint. Upon suspension, the added basepoint becomes a sphere spectrum wedge summand that belongs to $X[\F_1]$ and is unaffected by completion. In some cases the result allows us to set this sphere summand aside while completing and add it back at the end.

\section{Stable maps between classifying spaces} \label{sec:StableMaps}

Let $G$ be a finite group. For a compact Lie group $K$, a \emph{principal $(G,K)$-bundle} is a principal $K$-bundle in the category of $G$-spaces. Denote by $B(G,K)$ the classifying space of principal $(G,K)$-bundles. Generalizing the Segal conjecture, May--Snaith--Zelewski proved the completion theorem for $\Stable_G B(G,K)_+$ in \cite{MaySnaithZelewski:FurtherGeneralizationOfSegal}. (This was originally proved for finite $K$ in \cite{LewisMayMcClure:Segal}.) Thus the natural map 
 \[ \Stable_G B(G,K)_+ \cong F(S^0,  \Stable_G B(G,K)_+ ) \longrightarrow F(EG_+, \Stable_G B(G,K)_+) \]
is an $I(G)$-adic completion map.

Now, $\Stable_G B(G,K)_+$ is $G$-split with underlying non-equivariant spectrum $BK$ so by Theorem \ref{thm:Split} we have a non-equivariant equivalence
\[ F(EG_+,\Stable_G B(G,K)_+)^G \simeq F(EG/G_+, \iota^*(\Stable_G B(G,K)_+)) \simeq F(BG_+,\Stable BK_+) \, . \]
Thus the spectrum of stable maps between classifying spaces can be computed as the completion of $ \Stable_G B(G,K)_+^G$. The tom Dieck splitting of this spectrum was further refined in \cite{LewisMayMcClure:Segal} to obtain the non-equivariant splitting,
 \begin{equation} \label{eq:B(G,K)splitting} 
 \Stable_G B(G,K)_+^G \simeq \bigvee_{[H,\varphi]} \Stable BW(H,\varphi)_+ \, .
 \end{equation}  
Here the wedge sum ranges over conjugacy classes of pairs $(H,\varphi)$ where $H \leq G$ and $\varphi$ is a group homomorphism $H \to K$, and $W(H,\varphi)$ is the Weyl group
 \[ W(H,\varphi) = N_{G\times K}(\Delta(H,\varphi))/ \Delta(H,\varphi), \]
of the subgroup
 \[ \Delta(H,\varphi) = \{ (h,\varphi(h)) \mid h \in H \} \leq G\times K \, .\]
(Equivalently, $W(H,\varphi)$ is the automorphism group of the principal $(G,K)$-bundle \mbox{$(G\times K)/\Delta(H,\varphi) \to G/H$}.)

Collecting these facts, topologists commonly say that $F(BG_+,\Stable BK_+)$ is the $I(G)$-adic completion of the wedge sum of classifying spaces in \eqref{eq:B(G,K)splitting}. This statement has up to now been rather vague and of limited practical use as the action of $A(G)$ in terms of this non-equivariant splitting is not understood, and the effect of $I$-adic completion has so far been a mystery. Using the results from Sections \ref{sec:Replacement} and \ref{sec:Decomposition} we can now make this a precise statement.

First we note that, for a family $\F$ of subgroups in $G$, we have 
 \[ \Stable_G B(G,K)_+[\F] \simeq \Stable_G B(\F,K)_+ \, ,   \]
where $B(\F,K)$ is the classifying space of principal $K$-bundles in the category of $\F$-spaces. The fixed-point spectrum $\Stable_G B(\F,K)_+^G$ admits a non-equivariant splitting similar to the one for $ \Stable B(G,K)_+^G $. 
\begin{proposition}[\cite{LewisMayMcClure:Segal}] \label{prop:Splitting} 
Let $G$ be a finite group and $K$ a compact Lie group. For a family $\F$ of subgroups in $G$, there is a non-equivariant splitting
\[ \Stable B(\F,K)_+^G \simeq   \bigvee_{\substack{[H,\varphi] \\ H \in \F}} \Stable{BW(H,\varphi)}_+ \, .\]
\end{proposition}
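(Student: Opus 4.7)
The plan is to apply the tom Dieck splitting (recalled in equation (1) of Section \ref{sec:ESHT}) to the $G$-space $E\F \times B(G,K)$, and then cut the resulting indexing set down to subgroups in $\F$ using the defining property of $E\F$. We may start from the identification, noted just above the proposition, that
\[ \Stable_G B(\F,K)_+ \simeq {E\F}_+ \sma \Stable_G B(G,K)_+ \simeq \Stable_G(E\F \times B(G,K))_+ \, . \]
Applying the tom Dieck splitting to the based $G$-space $(E\F \times B(G,K))_+$ gives a non-equivariant equivalence
\[ \Stable_G B(\F,K)_+^G \simeq \bigvee_{(H)} \Stable\bigl(EW_G(H)_+ \sma_{W_G(H)} (E\F \times B(G,K))^H\bigr) \, , \]
where the wedge ranges over all conjugacy classes of subgroups $H \leq G$.

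The first simplification comes from computing the $H$-fixed points factorwise as $E\F^H \times B(G,K)^H$. If $H \notin \F$ then $E\F^H = \emptyset$, so the corresponding summand is contractible and drops out of the wedge. If $H \in \F$ then $E\F^H$ is contractible, and hence $E\F^H \times B(G,K)^H \to B(G,K)^H$ is a $W_G(H)$-equivariant equivalence, so the corresponding summand is equivalent to $\Stable\bigl(EW_G(H)_+ \sma_{W_G(H)} B(G,K)^H\bigr)$. This leaves us with the original Lewis--May--McClure wedge sum in \eqref{eq:B(G,K)splitting}, but indexed only over conjugacy classes with $H \in \F$.

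The second step is then identical to the argument in \cite{LewisMayMcClure:Segal}: decompose $B(G,K)^H$, up to $W_G(H)$-equivariant homotopy, as the disjoint union $\coprod_{[\varphi]} B C_K(\varphi(H))$ indexed by $K$-conjugacy classes of homomorphisms $\varphi \colon H \to K$, observe that $W_G(H)$ permutes these components through the action on conjugacy classes $[H,\varphi]$, and identify the Borel construction on each $W_G(H)$-orbit with $BW(H,\varphi)_+$ by a standard subgroup-of-a-product-over-stabilizer calculation giving $W(H,\varphi) = N_{G\times K}(\Delta(H,\varphi))/\Delta(H,\varphi)$. Combining these two steps yields the claimed splitting.

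The only real content is this second step, which is the refinement of the tom Dieck splitting carried out in \cite{LewisMayMcClure:Segal}; the novelty here is purely formal, namely observing that the contractibility/emptiness dichotomy for $E\F^H$ automatically restricts the wedge to the desired indexing set. Consequently I would present the proof as a short reduction to the case $\F = \F_{all}$ treated in \emph{loc.\ cit.}, with the family $\F$ entering only through the vanishing of $E\F^H$ for $H\notin \F$.
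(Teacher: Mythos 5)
Your proposal is correct and amounts to the same approach as the paper, whose entire proof is the remark that Lewis--May--McClure handle $\F_{all}$ and ``a minor adjustment generalizes their proof''; your reduction --- apply the tom Dieck splitting to $E\F \times B(G,K)$ and use the empty/contractible dichotomy for $E\F^H$ to cut the index set before running the LMM identification of the summands --- is precisely that adjustment made explicit. One small imprecision: the projection $E\F^H \times B(G,K)^H \to B(G,K)^H$ is a $W_G(H)$-map that is only a \emph{non-equivariant} equivalence (it need not be a genuine $W_G(H)$-equivalence, since $(E\F^H)^{L/H} = E\F^L$ can be empty for $H \leq L \leq N_G(H)$ with $L \notin \F$), but that suffices because the Borel construction $EW_G(H)_+ \sma_{W_G(H)} (-)$ only depends on the underlying homotopy type.
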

\begin{proof}
Lewis--May--McClure proved this for $B(G,K) = B(\F_{all},K$) in \cite{LewisMayMcClure:Segal}. A minor adjustment generalizes their proof to any family of subgroups.
\end{proof}

This leads to the following description of stable maps between classifying spaces, which is Theorem \ref{mthm:StableMaps} of the introduction, and generalizes \cite[Theorem B]{KR:pSegal}.
\begin{theorem} \label{thm:StableMaps}
For a finite group $G$ and a compact Lie group $K$, there is a homotopy equivalence
\[ F(BG_+, \Stable BK_+) 
    \simeq  \bigvee_{\substack{[H,\varphi] \\ H \in \F_\mathbf{P}}} 
    \Stable (BW(H,\varphi)^{\wedge}_{p(H)} )_+ \, , 
\] 
where, for $H \in \F_\mathbf{P}\setminus \F_1$, $p(H)$ is the prime that divides $|H|$, and $p(1) = 0$.
\end{theorem}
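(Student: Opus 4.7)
The plan is to reduce the computation of $F(BG_+, \Stable BK_+)$ to the $I(G)$-adic completion of the equivariant classifying-space spectrum $X = \Stable_G B(G,K)_+$, then apply the three structural theorems of the paper in turn, and finally translate each piece non-equivariantly via Proposition \ref{prop:Splitting}. To set up, I combine the May--Snaith--Zelewski completion theorem (so $X \to F(EG_+, X)$ is an $I(G)$-adic completion) with the $G$-split property of $X$ (underlying spectrum $\Stable BK_+$) and Theorem \ref{thm:Split}, to obtain
\[ F(BG_+, \Stable BK_+) \simeq F(EG_+, X)^G \simeq (X^\wedge_{I(G)})^G. \]
Under the identification $X[\F] \simeq \Stable_G B(\F, K)_+$, Theorems \ref{mthm:Replacement} and \ref{mthm:Decomposition} then let me rewrite this as the $G$-fixed points of $\hocolim_{\F \in \calD} X[\F]^\wedge_{I(G)}$.

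The next step is to apply Theorem \ref{mthm:X[F]Comp} to each term. Part (a) gives $X[\F_1]^\wedge_{I(G)} \simeq X[\F_1]$, whose $G$-fixed points are, by Proposition \ref{prop:Splitting} applied to $\F_1$, the unique $H=1$ summand $\Stable B(G\times K)_+$ (since $W(1,1) = G \times K$). Part (c) gives, for each prime $p$, a cofibration sequence of $G$-spectra
\[ X[\F_1] \longrightarrow X[\F_p]^\wedge_{I(G)} \longrightarrow \pComp{X[\F_p, \F_1]}. \]
Taking $G$-fixed points (which preserves cofibration sequences) and using that, for bounded-below $G$-spectra, $p$-adic completion commutes with $(-)^G$ and with wedge sums, Proposition \ref{prop:Splitting} identifies the third term as $\bigvee \Stable (BW(H, \varphi)^\wedge_p)_+$ summed over $[H, \varphi]$ with $H \in \F_p \setminus \F_1$, using the standard identification $\pComp{\Stable BW_+} \simeq \Stable (BW^\wedge_p)_+$ for compact Lie $W$. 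Finally, Proposition \ref{prop:Splitting} applied to both $\F_1$ and $\F_p$ provides a natural retraction of $X[\F_1]^G \to X[\F_p]^G$ whose target is already $I(G)$-complete (by part (a)), so the retraction extends through the completion and splits the cofibration at the fixed-point level, yielding
\[ (X[\F_p]^\wedge_{I(G)})^G \simeq X[\F_1]^G \vee \bigvee_{\substack{[H, \varphi] \\ H \in \F_p \setminus \F_1}} \Stable (BW(H, \varphi)^\wedge_p)_+. \]

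To close, I compute the homotopy colimit over $\calD$ at the $G$-fixed-point level. Because each map $X[\F_1]^G \to (X[\F_p]^\wedge_{I(G)})^G$ is a split cofibration with the common central summand $X[\F_1]^G$, the multi-pushout over $\calD$ reduces to a single wedge
\[ X[\F_1]^G \vee \bigvee_p \bigvee_{\substack{[H, \varphi] \\ H \in \F_p \setminus \F_1}} \Stable (BW(H, \varphi)^\wedge_p)_+, \]
and interpreting $p(1) = 0$ as ``no completion'' folds the central summand $\Stable B(G\times K)_+$ back into the indexed wedge as the $H=1$ term, producing exactly the statement. The main obstacle, I expect, is the bookkeeping around compatibility of $(-)^G$ with $p$-adic completion and with the $\calD$-homotopy colimit, and especially verifying that the tom Dieck splitting of $X[\F_p]^G$ survives the $I(G)$-adic completion so as to give the splitting of the post-completion cofibration; the latter is the essential content and follows from the fact that $X[\F_1]$ is already $I(G)$-complete, so the retraction of the un-completed cofibration factors uniquely through the universal completion map.
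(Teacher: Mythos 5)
Your proposal follows essentially the same route as the paper: reduce to the $I(G)$-adic completion of $\Stable_G B(G,K)_+$ via the May--Snaith--Zelewski completion theorem together with $G$-splitness, apply Theorems \ref{mthm:Replacement}, \ref{mthm:Decomposition} and \ref{mthm:X[F]Comp}, and identify all the pieces through Proposition \ref{prop:Splitting}; the only difference is that you unpack Corollary \ref{cor:CompAsWedge}(b) by hand at the fixed-point level (your splitting-survives-completion step is exactly the content of that corollary) rather than citing it. The paper additionally notes at the end that $K$, hence $B(G,K)$ and all spectra in sight, have ($G$-)CW homotopy type, which upgrades the weak equivalence to the stated homotopy equivalence --- a routine step your write-up omits.
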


\begin{proof}
Proposition \ref{prop:Splitting} gives us 
\[ \Stable B(\F_1,K)_+^G \simeq \Stable{BW(1,i)}_+ \, \]
where $i$ is the unique homomorphism $1 \to K$, and 
 \[  \Stable B(\F_p,K)_+^G \simeq   \bigvee_{\substack{[P,\varphi] \\ P \in \F_p}} \Stable{BW(P,\varphi)}_+ \, .\]
Thus $\Stable B(\F_1,K)_+^G$ is a summand of $\Stable B(\F_p,K)_+^G $ with cofibre
 \[  \bigvee_{\substack{[P,\varphi] \\ 1 < P \in \F_p}} \Stable{BW(P,\varphi)}_+ \,.\]
Applying part (b) of Corollary \ref{cor:CompAsWedge} we therefore have a weak equivalence 
 \[ F(BG_+ , \Stable BK_+) \simeq  \Stable{BW(1,i)}_+ \vee \bigvee_p \bigvee_{\substack{[P,\varphi] \\ 1 < P \in \F_p}} (BW(P,\varphi)^{\wedge}_{p} )_+ \, , \] 
and this is just a different way to write the wedge sum in the statement of the theorem.

What remains is to note that (by Morse theory) $K$ has the homotopy type of a CW-complex. Therefore $B(G,K)$ also has the homotopy type of a CW-complex, and all the spectra in sight have the homotopy type of $G$-CW spectra. Hence the weak equivalence is in fact a homotopy equivalence.
\end{proof}

We pause to note that
 \[  BW(1,i)_+ = B(G\times K)_+ \simeq BG_+ \sma BK_+ \, .\]
We have a stable splitting
\[ \Stable BG_+ \simeq \SphereSpectrum \vee \bigvee_p \Stable \pComp{BG} \, ,\]
where the sphere spectrum summand arises from the natural equivalence $\Stable X_+ \simeq \SphereSpectrum \vee \Stable X$, and the wedge sum can be taken over all primes $p$ or just those primes that divide the order of $G$. It follows that we have a splitting
 \[ 
 \Stable B(\F_1,K)_+^G  \simeq \Stable BK_+ \vee \bigvee_p \left( \Stable \pComp{BG} \sma \Stable BK_+ \right) \, ,
 \] 
and we can rewrite the conclusion of Theorem \ref{thm:StableMaps} as a homotopy equivalence
\[ 
  F(BG_+,\Stable BK_+) \simeq 
  \Stable{BK_+} \vee
  \bigvee_p \left( \left( \Stable \pComp{BG} \sma \Stable BK_+ \right) 
  \vee \bigvee_{\substack{[P,\varphi] \\ 1 < P \in \F_p}}   
  (BW(P,\varphi)^{\wedge}_{p} )_+   \right) \, .
\] 
The difference here is just a matter of bookkeeping, but it allows us to easily focus attention on one prime at a time, obtaining the following corollary, which generalizes \cite[Theorem A]{KR:pSegal}.

\begin{corollary} \label{cor:pCompStableMaps}
For a finite group $G$, a compact Lie group $K$ and a prime $p$, there is a homotopy equivalence
 \[ F( \pComp{BG}_+ , BK_+) 
    \simeq \left( \Stable \pComp{BG}_+ \sma \Stable BK_+ \right) 
    \vee \bigvee_{\substack{[P,\varphi] \\ 1 < P \in \F_p}}           
         (BW(P,\varphi)^{\wedge}_{p} )_+ \, . 
 \]
\end{corollary}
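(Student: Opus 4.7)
The plan is to derive the corollary from Theorem \ref{thm:StableMaps} by isolating the contribution of a single prime. The starting point is the classical stable splitting
\[ \Stable BG_+ \simeq \SphereSpectrum \vee \bigvee_q \Stable \pComp{BG}, \]
with $q$ ranging over primes dividing $|G|$. Such a splitting holds because $\Stable BG$ is rationally trivial (as $G$ is finite) and only finitely many primes contribute, so the arithmetic fracture square degenerates to this finite wedge decomposition.

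Applying the contravariant functor $F(-, \Stable BK_+)$ and using that finite wedges of spectra coincide with finite products, I would obtain
\[ F(BG_+, \Stable BK_+) \simeq \Stable BK_+ \vee \bigvee_q F(\pComp{BG}, \Stable BK_+). \]
Comparing this with the rewritten form of Theorem \ref{thm:StableMaps} displayed immediately before the corollary,
\[ F(BG_+,\Stable BK_+) \simeq \Stable BK_+ \vee \bigvee_q \left( \Stable \pComp{BG} \sma \Stable BK_+ \vee \bigvee_{\substack{[P,\varphi] \\ 1 < P \in \F_q}}(BW(P,\varphi)^\wedge_q)_+ \right), \]
and matching summands prime by prime yields, for the prime $p$ of interest,
\[ F(\pComp{BG}, \Stable BK_+) \simeq \Stable \pComp{BG} \sma \Stable BK_+ \vee \bigvee_{\substack{[P,\varphi] \\ 1 < P \in \F_p}}(BW(P,\varphi)^\wedge_p)_+ . \]

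To conclude, I would unfold the disjoint basepoint via $\Stable \pComp{BG}_+ \simeq \SphereSpectrum \vee \Stable \pComp{BG}$ to get
\[ F(\pComp{BG}_+, \Stable BK_+) \simeq \Stable BK_+ \vee F(\pComp{BG}, \Stable BK_+), \]
and then repackage $\Stable BK_+ \vee (\Stable \pComp{BG} \sma \Stable BK_+) \simeq \Stable \pComp{BG}_+ \sma \Stable BK_+$ to reach the stated formula. A final remark about CW-homotopy type, as in the proof of Theorem \ref{thm:StableMaps}, upgrades the weak equivalence to a genuine homotopy equivalence.

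The main obstacle is justifying the prime-by-prime matching of summands, since two wedge decompositions of the same spectrum need not be intrinsically identified. What makes it work is that both decompositions are compatible with the primary decomposition of the completed Burnside ring $A(G)^\wedge_{I(G)}$. The wedge in Theorem \ref{thm:StableMaps} inherits such a decomposition from the homotopy colimit of Theorem \ref{mthm:Decomposition}, which is literally indexed by primes. The idempotent $e_p \in A(G)^\wedge_{I(G)}$ corresponding to the summand $\Stable \pComp{BG}$ of $\Stable BG_+$ therefore acts as the identity on the prime-$p$ summands $\Stable \pComp{BG} \sma \Stable BK_+$ and $(BW(P,\varphi)^\wedge_p)_+$ and as zero on the analogous summands for other primes; applying $e_p$ to both expressions for $F(BG_+,\Stable BK_+)$ identifies $F(\pComp{BG}, \Stable BK_+)$ with the prime-$p$ block, as required.
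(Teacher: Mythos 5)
Your overall route (split $\Stable BG_+$ into its prime pieces, apply $F(-,\Stable BK_+)$, and match the resulting wedge against the rewritten form of Theorem \ref{thm:StableMaps}) parallels the paper's, and you correctly identify that everything hinges on justifying the prime-by-prime matching. But the mechanism you propose for that step fails: there is no idempotent $e_p \in A(G)^\wedge_{I(G)}$ ``corresponding to the summand $\Stable \pComp{BG}$'' once $|G|$ is divisible by two distinct primes. For any finite $G$-set $X$ and any subgroup $C$ of prime order $q$ one has $|X| \equiv |X^{C}| \pmod q$, and this congruence between $\phi^1$ and $\phi^{C}$ persists in the $I(G)$-adic completion. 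An idempotent doing what you ask would need $\phi^{P}(e_p)=1$ for nontrivial $p$-subgroups $P$ and $\phi^{Q}(e_p)=0$ for nontrivial $q$-subgroups $Q$ with $q \neq p$, while $\phi^1(e_p)\in\{0,1\}$; the congruences then force $\phi^1(e_p)=1$ and $\phi^1(e_p)=0$ simultaneously. Concretely, $A(C_2)^\wedge_{I} \cong \{(u,v)\in \Z\times \Z_2 : u \equiv v \bmod 2\}$ has no idempotents other than $0$ and $1$, and the same happens for $C_6$. This is exactly the phenomenon the paper keeps emphasizing: the prime components are glued along the trivial-isotropy piece $X[\F_1]$, which is why Theorem \ref{mthm:Decomposition} is a homotopy colimit rather than a wedge. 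The prime splitting of $\Stable BG_+$ is realized by idempotent stable self-maps living in the full ring $\{BG_+,BG_+\}$ (a completed double Burnside ring), not in the image of $A(G)^\wedge_{I(G)}$, and such self-maps have no evident compatible action on the equivariantly derived decomposition; so your matching step remains unjustified.

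The paper closes this gap by a different comparison: for each prime $q$ it takes a Sylow $q$-subgroup $S_q$ and uses that $BG^{\wedge}_{q}{}_+$ is a stable summand of both $BG_+$ and $B{S_q}{}_+$, hence $F(BG^{\wedge}_{q}{}_+,\Stable BK_+)$ is a summand of both $F(BG_+,\Stable BK_+)$ and $F(B{S_q}{}_+,\Stable BK_+)$; applying Theorem \ref{thm:StableMaps} to the $q$-group $S_q$ shows the latter is a wedge of $q$-completed classifying spaces together with a $\Stable B(S_q\times K)_+$ term, and asking which summands of $F(BG_+,\Stable BK_+)$ can occur in which $F(B{S_q}{}_+,\Stable BK_+)$ forces the stated identification. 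To repair your argument you would need a substitute for $e_p$ along these lines (or a direct argument that the cross-prime summands admit no nontrivial retractions), rather than an element of $A(G)^\wedge_{I(G)}$.
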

\begin{proof}
For a prime $q$, let $S_q$ be a Sylow $q$-subgroup of $G$. Standard arguments show that ${{BG}^{\wedge}_q}{}_+$ is a stable summand of both $BG_+$ and ${BS_q}{}_+$. Hence $F( {{BG}^{\wedge}_q}{}_+ , BK_+)$ is a stable summand of both $F( BG_+ , BK_+)$ and 
$F( {BS_q}{}_+ , BK_+)$. Furthermore, since $BG$ splits stably as a wedge sum of its $q$-completions, every summand of $F( BG_+ , BK_+)$ is a summand of $F( {BS_q}{}_+ , BK_+)$ for some prime $q$. Considering which summands of  $F( BG_+ , BK_+)$ can belong to which $F( {BS_q}{}_+ , BK_+)$ forces the desired result. For these purposes, the leading term in the wedge sum is rewritten as 
 \[  \Stable \pComp{BG}{}_+ \sma \Stable BK_+ 
     \simeq (\SphereSpectrum \vee \Stable \pComp{BG} ) \sma \Stable BK_+ 
     \simeq \Stable BK_+ \vee  ( \pComp{BG} \sma \Stable BK_+) \, .
 \]
\end{proof}

A case of special interest is when $K = 1$ and Theorem \ref{thm:StableMaps} and Corollary \ref{cor:pCompStableMaps} describe the stable mapping duals of $BG_+$ and $\pComp{BG}{}_+$, respectively.

\begin{corollary} \label{cor:MappingDuals}
For a finite group $G$, there are non-equivariant equivalences
\[ D(BG_+) \simeq \bigvee_{H \in \F_\mathbf{P}} \Stable(BW_G(H)^\wedge_{p(H)})_+, \]
and
\[ D(\pComp{BG}{}_+) \simeq \bigvee_{H \in \F_p} \Stable (\pComp{BW_G(H)})_+ \, . \]
\end{corollary}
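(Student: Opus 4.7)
The plan is to derive both equivalences as direct specializations of Theorem \ref{thm:StableMaps} and Corollary \ref{cor:pCompStableMaps} to the case $K = 1$. First I would observe that when $K$ is trivial, $BK$ is a single point, so $\Stable BK_+ \simeq \SphereSpectrum$, and therefore $F(BG_+, \Stable BK_+) = F(BG_+, \SphereSpectrum) = D(BG_+)$, and similarly $F(\pComp{BG}{}_+, \Stable BK_+) = D(\pComp{BG}{}_+)$. So it suffices to compute the right-hand sides of Theorem \ref{thm:StableMaps} and Corollary \ref{cor:pCompStableMaps} in this case.

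Next I would analyze the indexing sets. When $K = 1$, the only group homomorphism $\varphi \colon H \to K$ is the trivial one, so conjugacy classes $[H,\varphi]$ in the wedge sum reduce to $G$-conjugacy classes of subgroups $H$. Moreover, in this case $\Delta(H,\varphi) = H \times 1 \leq G \times 1 = G$, and hence
\[ W(H,\varphi) = N_{G \times 1}(H \times 1) / (H \times 1) \;\cong\; N_G(H)/H \;=\; W_G(H). \]
Substituting this identification into the wedge sum in Theorem \ref{thm:StableMaps} produces the first stated equivalence directly.

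For the second equivalence, I would substitute $K = 1$ into Corollary \ref{cor:pCompStableMaps}. The ``leading term'' simplifies via
\[ \Stable \pComp{BG}{}_+ \sma \Stable BK_+ \;\simeq\; \Stable \pComp{BG}{}_+, \]
and the wedge over pairs $[P,\varphi]$ with $1 < P \in \F_p$ reduces, by the same argument as above, to a wedge over nontrivial $G$-conjugacy classes of $p$-subgroups $P$, with $W(P,\varphi)$ replaced by $W_G(P)$. Finally, using the identification $W_G(1) = G$, the leading term $\Stable \pComp{BG}{}_+$ is precisely the contribution of the trivial subgroup to the wedge sum indexed over all of $\F_p$, giving the compact form stated in the corollary.

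There is no serious obstacle here: once the identification of mapping spectra with duals and of $W(H,\varphi)$ with $W_G(H)$ is in place, the result is a matter of rewriting. The only care needed is to keep track of the $H = 1$ term, which supplies the $\Stable \pComp{BG}{}_+$ summand in the $p$-complete version and the $\Stable BG_+$ summand (via $W_G(1) = G$ and $p(1) = 0$, so $BW_G(1)^\wedge_{p(1)} = BG$) in the integral version.
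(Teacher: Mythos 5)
Your proposal is correct and is exactly the paper's (implicit) argument: the corollary is stated as the specialization $K=1$ of Theorem \ref{thm:StableMaps} and Corollary \ref{cor:pCompStableMaps}, using $\Stable B1_+ \simeq \SphereSpectrum$, the collapse of pairs $[H,\varphi]$ to conjugacy classes of subgroups, and the identification $W(H,\varphi)\cong W_G(H)$, with the $H=1$ term absorbing the leading summand as you describe. Nothing to add.
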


\bibliographystyle{acm}
\bibliography{pStrongSegal}
%\bibliography{kari}
\end{document}